\numberwithin{equation}{section}
\newtheorem{theorem}{Theorem}[section]
\newtheorem{lemma}[theorem]{Lemma}
\newtheorem{corollary}[theorem]{Corollary}
\theoremstyle{definition}
\newtheorem{definition}[theorem]{Definition} 
\newtheorem{remark}[theorem]{Remark}
\newtheorem{example}[theorem]{Example}
\DeclareMathOperator{\Min}{Min}
\newcommand{\Sym}[1]{Sym(#1)}
\newcommand{\K}{{\mathbb{K}}}
\newcommand{\N}{{\mathbb{N}}}
\definecolor{mygreen}{rgb}{0.0, 0.5, 0.0}
\patchcmd{\subsection}{-.5em}{.5em}{}{}
\patchcmd{\subsection}{2}{3}{}{}
\title{Symbolic Powers of Toric Ideals}
\author{Giuseppe Favacchio and Graham Keiper}
\address[G. Favacchio]
	{Universit\`a degli studi di Palermo, Dipartimento di Ingegneria, Viale delle Scienze, Palermo, Italy}
\email{giuseppe.favacchio@unipa.it}
\address[G. Keiper]
	{Universit\`a degli studi di Catania, Viale Andrea Doria, Catania, Italy}
\email{graham.keiper@unict.it}
	\keywords{Toric ideals, powers of ideals, symbolic powers}
	\subjclass[2020]{13D02, 13P10, 05E40}
\date{May 11, 2025}
\begin{document}
\begin{abstract}
    This paper investigates the symbolic powers of toric ideals.  We first describe them in terms of the kernel of certain linear maps derived from the lattice structure of the toric ideal. 
Furthermore, we apply our results to show that symbolic powers of a toric ideal can also be expressed as saturations of regular powers  with the monomial given by the product of all the variables.
Finally, we conclude with a computationally significant result for computing symbolic powers of toric ideals. 
\end{abstract}
\maketitle
 
\section{Introduction}
Let $R$ be a standard graded polynomial ring over an algebraically closed field $\K$ of characteristic zero, we denote by $\mathfrak M$ the  irrelevant ideal of $R$ and  by $\boldsymbol{m}$ the product of all the variables in $R$. 
For a given ideal $I\subseteq R$ the \emph{$n^{\text{th}}$ regular power} is defined as \[I^{n}=\underbrace{I\cdot I\cdots I}_{n}.\] 
The \emph{$n^{\text{th}}$ symbolic power} of an ideal $I\subseteq R$ with no embedded primes is defined as
$$I^{(n)}=\bigcap_{\mathfrak q \in \Min(R/I)} (I^nR_{\mathfrak q})\cap R$$ where $\Min(R/I)$ are the minimal associated primes of $R/I$.

There are situations where the regular power does not behave as one would like it to, for example one can see that any $n^{\text{th}}$ order partial derivative of an element of $I^{n}$ is an element of $I$, however $I^{n}$ does not necessarily contain all these elements. In such situations we should instead investigate  $I^{(n)}$. Indeed,  the Nagata-Zariski Theorem claims, see for instance \cite[Theorem 3.14]{E95}, 

\noindent {\bf Nagata-Zariski Theorem.}
   \emph{ Let $I$ be a prime ideal of a polynomial ring $R$ over  an algebraically closed field~$\K$. Then 
    \[I^{(n)}=\left\{f\in R \ |\  \frac{\partial^{k}f}{\partial M}\in I, \text{ for all the monomials } M\in R_k\ \text{and}\ k < n  \right\}.\]}

We will use this result several times in the paper. It also captures the idea that the $n^{\text{th}}$ symbolic power of an ideal $I_V$ contains geometrically relevant information on the associated variety $V$, and in particular on its irreducible components.
Hence, for a radical ideal $I=I_V$ the Nagata–Zariski theorem  ensures that the $n^{\text{th}}$ symbolic power of $I$ is equal to 
$$ I^{(n)}=\bigcap_{p\in V} I_p^n $$
where $I_p$ is the ideal defining the point $p.$
 The study of symbolic powers of ideals has gained lot of attention because of deep connections with algebraic, geometric and combinatorial properties of $I$, see for instance \cite{bocci2016waldschmidt, BH10,carlini2020ideals, cooper2024splittings,  ficarra2025symbolic, geramita2017matroid, MMN2024formula} just to cite a few of them. See also \cite{Grifo24} for an introduction and an overview on several topics involving symbolic powers.

It follows from the Nagata-Zariski Theorem that the symbolic powers of $I$
are $\mathfrak M$-saturated ideals, i.e. $I^{(n)}:\mathfrak M=I^{(n)}$. Therefore, since it is always true that $I^{(n)}\supseteq I^n$,   we have
$$I^{(n)}\supseteq I^n: \mathfrak M^{\infty}.$$

The opposite containment does not always hold. It does, for instance,   when $I=I_Z$ is the ideal defining a zero-dimensional varieties $Z$, in this case we have $I_Z^{(n)}=I_Z^n:\mathfrak M$. 

When $I$ is a prime ideal we have that $I^n: \mathfrak M^{\infty}$ is a $I$-primary ideal and the following are equivalent (cf. \cite[Lemma 2.2]{Grifo24})
    \begin{itemize}
\item  The $n^{\text{th}}$ symbolic power of $I$ is the ideal $I^{(n)}=I^nR_I\cap R$;
\item    
    $I^{(n)}=\{f\in R \ |\ fg\in I^n\ \text{for some}\ g\notin I  \};$
        \item $I^{(n)}$ is the unique $I$-primary component in an irredundant
primary decomposition of $I^n$;
\item 
$I^{(n)}$ is the smallest $I$-primary ideal containing $I^n$.
    \end{itemize}
 Moreover, the equality $I^{(n)} = I^n$
is  equivalent to the condition that $I^n$
is a $I$-primary ideal.

In general, the $n^{\text{th}}$ symbolic power of $I$ can be computed starting from the $n^{\text{th}}$ regular power of $I$ and saturating it with respect to a certain radical ideal $J$, which usually is difficult to compute, see for instance \cite[Proposition 3.13]{E95} and \cite[Section 3]{HHT07}. For example, it is shown in \cite{MMN2024formula} that, when $I$ has height $c$ and under certain other assumptions, $J$ is the $c$-th fitting ideal of $I$.

In this paper we study symbolic powers of toric ideals. Several properties of toric ideals have been studied in recent years from different points of view, see
for instance \cite{bhaskara2023comparing, constantinescu2018, Adam23, F24, FHKV, FKV, garcia2023robustness,  K-thesis} and \cite[Section 5.3]{herzog2018binomial} for a more general overview on the topic.

In Section \ref{s.via the kernel } we show (Theorem \ref{t.t-th symb pow}) that the symbolic power of a toric ideal can be computed as a kernel of the linear maps  $\pi^{(t)}:  \K[e_1,\ldots, e_n]\to  \K^{n}\otimes\cdots\otimes\K^{n}$,   induced by    $\pi^{(t)}(e^{\alpha})= \underbrace{\alpha\otimes \cdots \otimes\alpha}_t,$ where $\alpha \in \mathbb N^n$ and $e^{\alpha}$ is the monomial $e_1^{\alpha_1}\cdots e_n^{\alpha_n}.$ 
 In Section \ref{s. via Saturations} we prove (Theorem \ref{t. sat second power} and Theorem \ref{t. sat t-th power}) that symbolic powers of a toric ideal $I_A$ in  $R=\K[e_{1}, \dots, e_{n}]$ are obtained by saturating the regular powers with $\boldsymbol{m}=e_{1}\cdots e_{n},$ that is 
$$I_A^{(t)}=I_A^{t}:\boldsymbol{m}^{\infty}.$$
Finally we describe (Remark \ref{r.improving})  how  an immediate result on saturation of homogeneous ideal (Lemma~\ref{l. main Saturations}) can be used  to improve the computation of symbolic powers of toric ideals.

\section{Symbolic powers of toric ideals via kernel of linear maps}\label{s.via the kernel }
In this section we characterize the symbolic powers of a toric ideal in terms of certain linear maps. We start by fixing  terminology and notation for toric ideals.

\begin{definition}[Toric Ideal] \label{d.toric}   
  Let $A\in \mathcal{M}_{m\times n}(\mathbb{N})$ be a matrix of non negative integers.  We denote its columns by $\varsigma_1, \ldots, \varsigma_n\in \mathbb N^{m}$.   We assume that the columns of $A$ are non-zero vectors. 
  Let $\varphi_A:\K[e_{1}, \dots, e_{n}]\rightarrow\K[x_{1}, \dots, x_{m}]$ be a graded ring homomorphism induced by  $\varphi(e_{i})= x^{\varsigma_{i}}$.
We define $I_{A}:=\ker(\varphi_{A})$ be the \emph{toric ideal defined by $A$}.
\end{definition}

\begin{definition}\label{d.fiber}
Given $A\in \mathcal{M}_{m\times n}(\mathbb{N})$ and $\sigma \in \N^m,$  the set of monomials in $\varphi_A^{-1}(x^{\sigma})\subseteq \K[e_{1}, \dots, e_{n}]$ is called the \emph{fiber} of $\sigma$. We denote by $V_{A,\sigma}$ the $\K$-vector space generated by the fiber of $\sigma$.

We say that $f\in\K[e_{1}, \dots, e_{n}]$ is  a \emph{ $\varphi_A$-homogeneous form} (or just \emph{ $\varphi_A$-homogeneous}) if there exists $\sigma \in \N^m$ such that $f\in V_{A,\sigma}$.   
\end{definition}

\begin{lemma}\label{l. NS cond for being in I}
   Given $A\in\mathcal{M}_{m\times n}(\mathbb{N})$, let $f=c_{1}e^{\beta_1}+\cdots +c_{r}e^{\beta_r}\in V_{A,\sigma}$ be a $\varphi_A$-homogeneous form. Then $f\in I_A$ if and only if  $c_1+\cdots+c_r=0$.
\end{lemma}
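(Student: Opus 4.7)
The plan is to simply evaluate $\varphi_A$ on $f$ and use the definition $I_A=\ker(\varphi_A)$ directly. The key observation is that ``being $\varphi_A$-homogeneous of fiber $\sigma$'' means every monomial appearing in $f$ maps to the same monomial $x^{\sigma}$, so $\varphi_A$ acts on $f$ by extracting the sum of its coefficients times $x^\sigma$.

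More precisely, first I would recall from Definition~\ref{d.fiber} that $f \in V_{A,\sigma}$ means each monomial $e^{\beta_i}$ lies in the fiber of $\sigma$, i.e.\ $\varphi_A(e^{\beta_i}) = x^{\beta_{i,1}\varsigma_1 + \cdots + \beta_{i,n}\varsigma_n} = x^\sigma$ for every $i = 1,\ldots,r$. Then by linearity of $\varphi_A$ one gets
\[
\varphi_A(f) \;=\; \sum_{i=1}^r c_i\,\varphi_A(e^{\beta_i}) \;=\; \Bigl(\sum_{i=1}^r c_i\Bigr) x^\sigma.
\]
Since $x^\sigma$ is a nonzero element of the polynomial ring $\K[x_1,\ldots,x_m]$ (which is in particular a domain), this quantity vanishes if and only if $\sum_{i=1}^r c_i = 0$. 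Combining with $f \in I_A \iff \varphi_A(f) = 0$ finishes both directions simultaneously.

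There is no real obstacle here: the statement is essentially a reformulation of the definition of fiber and of $I_A$ as the kernel of a $\K$-linear map that sends all monomials in a common fiber to the same image. The only thing to be careful about is to spell out why $\varphi_A(e^{\beta_i}) = x^\sigma$ for \emph{every} $i$ (this is precisely what $\varphi_A$-homogeneity encodes), so that the factoring $\varphi_A(f) = (\sum c_i)\, x^\sigma$ is legitimate.
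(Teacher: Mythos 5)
Your proof is correct and follows exactly the paper's argument: apply $\varphi_A$ to $f$, use that every monomial in the fiber of $\sigma$ maps to $x^\sigma$ to factor $\varphi_A(f)=(\sum c_i)x^\sigma$, and conclude from $x^\sigma\neq 0$. The extra care you take in justifying why $\varphi_A(e^{\beta_i})=x^\sigma$ for every $i$ is a welcome but minor elaboration of the same proof.
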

\begin{proof}
Note that $\varphi_A(f)=c_{1}x^{\sigma}+\cdots +c_{r}x^{\sigma}=x^{\sigma}(c_1+\cdots+c_r).$
Then
\[f\in I_A \Leftrightarrow \varphi_A(f)=0  \Leftrightarrow c_1+\cdots+c_r=0.\qedhere\]
\end{proof}

\begin{lemma}\label{l. generators in same fiber (t)}
     There exists a minimal set of generators for $I_A^{(t)}$ consisting of $\varphi_A$-homogeneous forms.
\end{lemma}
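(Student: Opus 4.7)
The plan is to equip $R=\K[e_1,\ldots,e_n]$ with the $\N^m$-grading induced by $\deg(e_i)=\varsigma_i$. Because every column $\varsigma_i$ of $A$ is assumed non-zero, one has $R_{0}=\K$, so this grading is positive and the standard graded Nakayama argument guarantees that any multigraded ideal of $R$ admits a minimal generating set consisting of multigraded elements. In the terminology of Definition \ref{d.fiber}, a form is $\varphi_A$-homogeneous precisely when it is multigraded for this grading, so the problem reduces to showing that $I_A^{(t)}$ is a multigraded ideal.

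For this I would proceed as follows. First, observe that $\varphi_A\colon R\to \K[x_1,\ldots,x_m]$ is a morphism of $\N^m$-graded rings (where $\K[x_1,\ldots,x_m]$ carries its standard $\N^m$-grading), so $I_A=\ker\varphi_A$ is multigraded. Since $I_A$ is prime, the Nagata--Zariski Theorem stated in the introduction gives
\[
I_A^{(t)}=\left\{f\in R\ :\ \frac{\partial^{k}f}{\partial M}\in I_A\ \text{for every monomial } M\in R_{k}\ \text{and every}\ k<t\right\}.
\]
Next, I would record the compatibility of partial differentiation with the multigrading: if $g$ is multigraded of multidegree $\sigma$ and $M=e^{\gamma}$ is a monomial, then $\partial^{|\gamma|}g/\partial M$ is either zero or multigraded of multidegree $\sigma-A\gamma$.

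Finally, decompose $f=\sum_{\sigma}f_{\sigma}$ into its multigraded components. Then $\partial^{k}f/\partial M=\sum_{\sigma}\partial^{k}f_{\sigma}/\partial M$ is itself a decomposition into pieces of pairwise distinct multidegrees, and since $I_A$ is multigraded this sum lies in $I_A$ if and only if each summand does. Combining this with the Nagata--Zariski characterization yields $f\in I_A^{(t)}$ if and only if every $f_{\sigma}\in I_A^{(t)}$, so $I_A^{(t)}$ is multigraded and the lemma follows. The only real subtlety is confirming the multigraded compatibility of the partial derivative operators; once this is in hand, the conclusion is an essentially formal combination of the Nagata--Zariski criterion and multigraded Nakayama.
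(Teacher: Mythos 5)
Your proof is correct, but it takes a genuinely different route from the paper's. The paper constructs a minimal generating set greedily (minimal standard degree, then minimal number of terms) and derives a contradiction: if a generator $g_i$ had terms in two different fibers, splitting $g_i$ into its part $B$ in $V_{A,\sigma}$ and the complementary part $C$, and applying Nagata--Zariski together with the observation that $\partial/\partial T$ maps $V_{A,\sigma}$ into $V_{A,\sigma-A\pi^{(1)}(T)}$, shows that $B$ and $C$ each lie in $I_A^{(t)}$, contradicting minimality of the number of terms. You instead prove the stronger and cleaner statement that $I_A^{(t)}$ is a homogeneous ideal for the $\N^m$-grading $\deg(e_i)=\varsigma_i$ (under which ``$\varphi_A$-homogeneous'' coincides with ``multigraded''), and then invoke graded Nakayama. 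The two arguments share the same mathematical core --- the Nagata--Zariski criterion plus the compatibility of partial differentiation with the fiber decomposition --- but your packaging isolates the reusable content (multigradedness of every symbolic power) and replaces the paper's term-counting contradiction with a standard structural fact; it also avoids the paper's slightly awkward intermediate claim about the parity of the number of $B$-terms. The one hypothesis you must (and do) flag is that the columns $\varsigma_i$ are non-zero, which makes the grading positive and hence the multigraded Nakayama lemma applicable.
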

\begin{proof} 
The proof of the lemma is constructive describing an algorithm for obtaining such a set of $\varphi_{A}$-homogeneous generators.

 Let $\mathcal G=\{g_{1},\dots, g_{k}\}$ be a set of elements in $I^{(t)}$ constructed in the following way: 
    \begin{itemize}
        \item[$i)$]   Select $g_1\in I^{(t)}_{A}$ to be of minimal degree under the standard grading of $R=\K[e_{1}, \dots, e_{n}]$ which consists of a minimal number of terms;
        \item[$ii)$] for $i>1$, $g_i\in I_{A}^{(t)}$ is of minimal degree under the standard grading in $R$ 
        consisting of a minimal number of terms and such that $g_i\notin (g_1,\ldots, g_{i-1})$.
    \end{itemize}
    Since $R$ is Noetherian this process must terminate. By construction $\mathcal{G}$ is a minimal generating set of $I_A^{(t)}$. 
    
    We claim that the elements in $\mathcal G$ are $\varphi_A$-homogeneous. 

    Suppose for contradiction that $g_{i}=B_{1}+B_{2}+\cdots +B_{N}+C_{1}+\cdots +C_{M}\in\mathcal{G}$ where the $B_{j}$ and $C_{k}$ are the monomial terms of $g_{i}$ and where $B_{j}\in V_{A,\sigma}$ for $j\in[N]$ and $C_{k}\notin V_{A,\sigma}$ for $k\in[M]$. Then since $g_{i}\in I_{A}^{(t)}\subseteq I_{A}$ we have that

 \[\varphi_{A}(g_{i})=\sum_{j=1}^{N}\varphi_{A}(B_{j})+\sum_{k=1}^{M}\varphi_{A}(C_{k})=0.\]
 Since $C_{k}\notin V_{A,\sigma}$ we must have that $N$ is even and that $\sum_{j=1}^{N}\varphi_{A}(B_{j})=0$ and so $B_{1}+\cdots +B_{N}\in I_{A}$. This also implies that $C_{1}+\cdots +C_{M}\in I_{A}$

 By Nagata-Zariski Theorem, for any monomial $T=e^{\tau}$ of degree $\alpha$, where $\alpha<t$,
denoted by $\dfrac{\partial}{\partial T} = \dfrac{\partial^{\alpha}}{\partial e_1^{\tau_1}\cdots \partial e_1^{\tau_n}},$
we have
\[\dfrac{\partial g_{i}}{\partial T}=\dfrac{\partial B_{1}}{\partial T}+\cdots+\dfrac{\partial B_{N}}{\partial T}+\dfrac{\partial C_{1}}{\partial T}+\cdots +\dfrac{\partial C_{M}}{\partial T}\in I_A.\]
We note that $\dfrac{\partial}{\partial T}$ takes elements of $V_{A,\sigma}$ to elements of $V_{A,\sigma - A(\pi^{(1)}(T))}$. Hence $\dfrac{\partial B_{1}}{\partial T}, \dots, \dfrac{\partial B_{N}}{\partial T}\in V_{A,\sigma - A(\pi^{(1)}(T))}$ and by the same argument as before $\dfrac{\partial B_{1}}{\partial T}+\cdots+\dfrac{\partial B_{N}}{\partial T}\in I_{A}$ and $\dfrac{\partial C_{1}}{\partial T}+\cdots +\dfrac{\partial C_{M}}{\partial T}\in I_A$. Hence $B_{1}+\cdots +B_{N}\in I^{(t)}_{A}$ and $C_{1}+\cdots +C_{M}\in I^{(t)}_{A}$. But this contradicts our assumption that $g_{i}$ has a minimal number of terms since one of $B_{1}+\cdots +B_{N}$ or $C_{1}+\cdots +C_{M}$ has the same degree as $g_{i}$ but with fewer terms. 
\end{proof}

\begin{definition}\label{d.pi}
Consider $\K[e_{1}, \dots, e_{n}]$ as a vector space over $\K$. Let $s\ge1$ and \[\pi^{(s)}:  \K[e_1,\ldots, e_n]\to  \K^{n}\otimes\cdots\otimes\K^{n}=(\K^{n})^s\] be the $\K$-linear map induced by
    $$\pi^{(s)}(e^{\alpha})=\displaystyle{\bigotimes_s \alpha } =\underbrace{\alpha\otimes \cdots \otimes\alpha}_s.$$
\end{definition}
 We will denote the null tensor by $0$. We also define the map $\pi^{(0)}:\K[e_1,\ldots, e_n]\to \K$ to be the $\K$-linear map induced by $\pi^{(0)}(e^{\alpha})=1$, therefore  $\pi^{(0)}(c_1e^{\sigma_1}+ \cdots + c_re^{\sigma_r})=c_1+\cdots+ c_r$. 
\begin{remark}\label{r. NS condition for being in I}
    Note that, from Lemma \ref{l. NS cond for being in I},  a $\varphi_A$-homogeneous element $f$ belongs to $I_A$ if and only if $\pi^{(0)}(f)=0$.
\end{remark}

\begin{remark}
    We note that $\K[e_{1}, \dots, e_{n}]$ is infinite dimensional over $\K$. However, restricting the domain to $V_{A,\sigma}$ then $\pi^{(s)}|_{V_{A,\sigma}}$ is a map between finite dimensional vector spaces and so it has a matrix representation. 
\end{remark}

  \begin{lemma}\label{l. ker containement} 
For $A\in \mathcal{M}_{m\times n}(\mathbb{N})$ and $\pi^{(s)}$ as in Definition \ref{d.pi} we have
$$\ker(\pi^{(s)}|_{V_{A,\sigma}})\subseteq \ker(\pi^{(s-1)}|_{V_{A,\sigma}}).$$
\end{lemma}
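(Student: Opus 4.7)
My approach is to exploit the fiber constraint to convert $\pi^{(s)}$ into $\pi^{(s-1)}$ via a single tensor contraction. Take an arbitrary element $f = \sum_{i=1}^{r} c_i e^{\beta_i} \in V_{A,\sigma}$ in $\ker(\pi^{(s)}|_{V_{A,\sigma}})$, so that
\[
\sum_{i=1}^{r} c_i\,\beta_i^{\otimes s} \,=\, 0 \,\in\, (\K^n)^{\otimes s}.
\]
The decisive observation is that, since every $\beta_i$ lies in the fiber of $\sigma$, one has $A\beta_i = \sigma$. Writing $a_k \in \K^n$ for the $k$-th row of $A$, this means the pairing $a_k\cdot \beta_i = \sigma_k$ is independent of $i$.

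The main step is to apply a contraction of the last tensor slot against $a_k$. Concretely, consider the $\K$-linear map $\Phi_k \colon (\K^n)^{\otimes s}\to(\K^n)^{\otimes(s-1)}$ defined on pure tensors by $v_1\otimes\cdots\otimes v_s\mapsto (a_k\cdot v_s)\, v_1\otimes\cdots\otimes v_{s-1}$. Applying $\Phi_k$ to the vanishing equation gives
\[
0 \,=\, \sum_{i=1}^{r} c_i\,(a_k\cdot \beta_i)\,\beta_i^{\otimes(s-1)} \,=\, \sigma_k \sum_{i=1}^{r} c_i\,\beta_i^{\otimes(s-1)} \,=\, \sigma_k\,\pi^{(s-1)}(f).
\]
If $\sigma \neq 0$, one chooses any $k$ with $\sigma_k \neq 0$ and concludes $\pi^{(s-1)}(f) = 0$, which is the desired inclusion.

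The only point requiring a separate look is the degenerate fiber $V_{A,0}$. Since each column of $A$ is a nonzero vector in $\N^m$, the only $\alpha \in \N^n$ with $A\alpha = 0$ is $\alpha = 0$, so $V_{A,0} = \K\cdot 1$; for $s\geq 2$ both $\ker(\pi^{(s)}|_{V_{A,0}})$ and $\ker(\pi^{(s-1)}|_{V_{A,0}})$ equal the whole line $V_{A,0}$ and the inclusion is automatic, while for $s=1$ this fiber does not intervene in the intended application to $I_A^{(t)}$, whose elements carry no constant term. I expect the only nonobvious step to be pinpointing the right contraction: once the fiber condition $A\beta_i = \sigma$ is rephrased as the statement that the rows of $A$ are constant covectors along the fiber, the argument collapses to a one-line computation.
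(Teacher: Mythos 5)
Your proof is correct and is essentially the same argument as the paper's: both exploit $A\beta_i=\sigma$ together with $\sigma\neq 0$ to contract away one tensor slot (the paper contracts the first $s-1$ slots against coordinate tuples and then applies $A$ to the resulting vector identity, which amounts to your single contraction against a row $a_k$ with $\sigma_k\neq 0$). Your explicit treatment of the degenerate fiber $\sigma=0$ is a small bonus that the paper simply excludes by assuming $\sigma\neq 0$.
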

\begin{proof} Let $f$ be a $\varphi_A$-homogeneous form, i.e.,
$f=c_1e^{\alpha_1}+\cdots+c_k e^{\alpha_k}\in V_{A,\sigma}$, for some $\sigma\in \N^{m}$, $\sigma\ne 0$.
Let $f\in \ker(\pi^{(s)}|_{V_{A,\sigma}})$, then 
\[c_1 \left(\bigotimes_{s} \alpha_1\right)+ \cdots+ c_k \left(\bigotimes_{s} \alpha_k\right)=0.\]
This implies that, for any $\underline{i}=(i_1,\ldots, i_{s-1})\in [n]^{s-1}$ 
$$c_1 \left(\prod_{j\in \underline{i}} \alpha_{1,j}\right) \alpha_1+ \cdots+ c_k \left(\prod_{j\in \underline{i}} \alpha_{k,j}\right) \alpha_k=0.$$
Applying $A$ we obtain
$$c_1 \left(\prod_{j\in \underline{i}} \alpha_{1,j}\right) A(\alpha_1)+ \cdots+ c_k \left(\prod_{j\in \underline{i}} \alpha_{k,j}\right)A(\alpha_k)=0.$$
Since $f\in V_{A,\sigma}$ we have $A(\alpha_i)=\sigma$ for $i\in[k]$. Therefore, we get 
\[\left(c_1 \left(\prod_{j\in \underline{i}} \alpha_{1,j}\right) + \cdots+ c_k \left(\prod_{j\in \underline{i}} \alpha_{k,j}\right)\right)\sigma=0,\ \ \ \ \forall \ \ \underline{i}=(i_1,\ldots, i_{s-1})\in [n]^{s-1}.\] Since $\sigma\ne 0$ we have 
\[c_1 \left(\prod_{j\in \underline{i}} \alpha_{1,j}\right) + \cdots+ c_k \left(\prod_{j\in \underline{i}} \alpha_{k,j}\right)=0,\ \ \ \ \forall \ \ \underline{i}=(i_1,\ldots, i_{s-1})\in [n]^{s-1},\]
that is
\[c_1 \left(\bigotimes_{s-1} \alpha_1\right)+ \cdots+ c_k \left(\bigotimes_{s-1} \alpha_k\right)=0.\qedhere\]
\end{proof}

\begin{theorem}\label{t.t-th symb pow} For $A\in \mathcal{M}_{m\times n}(\mathbb{N})$ let $I_{A}$ be the toric ideal of $A$ and $\pi^{(t)}$ be as in Definition \ref{d.pi}. Then 
$$I_A^{(t+1)}= \ker(\pi^{(t)})=\bigoplus_{\sigma \in \N^m} \ker(\pi^{(t)}|_{V_{A,\sigma}}).$$
\end{theorem}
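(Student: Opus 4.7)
The plan is to use Lemma \ref{l. generators in same fiber (t)} to reduce the theorem to a fiberwise statement, then combine the Nagata-Zariski Theorem with an explicit derivative computation, and finally match the resulting conditions with $\pi^{(t)}(f)=0$ via a polynomial basis change.

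First I would show that $I_A^{(t+1)}$ is $\varphi_A$-graded, i.e.
\[I_A^{(t+1)} = \bigoplus_{\sigma \in \N^m} \bigl(I_A^{(t+1)} \cap V_{A,\sigma}\bigr).\]
By Lemma \ref{l. generators in same fiber (t)}, $I_A^{(t+1)}$ admits a generating set of $\varphi_A$-homogeneous elements $g_1,\dots,g_k$. Writing any $f \in I_A^{(t+1)}$ as $\sum_i h_i g_i$ and decomposing each $h_i$ into its $\varphi_A$-homogeneous components $h_{i,\rho}$ presents $f$ as a sum of $\varphi_A$-homogeneous elements of $I_A^{(t+1)}$, because the product of two $\varphi_A$-homogeneous forms is again $\varphi_A$-homogeneous. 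This reduces the theorem to verifying the fiberwise identity $I_A^{(t+1)} \cap V_{A,\sigma} = \ker(\pi^{(t)}|_{V_{A,\sigma}})$ for each $\sigma \in \N^m$.

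Fix $\sigma \ne 0$ and $f = \sum_i c_i e^{\alpha_i} \in V_{A,\sigma}$. A direct computation yields
\[\frac{\partial^{k} f}{\partial T} \;=\; \tau! \sum_i c_i \binom{\alpha_i}{\tau}\, e^{\alpha_i-\tau}\]
for $T = e^{\tau}$ of degree $k$, with the convention $\binom{\alpha_i}{\tau}=0$ whenever $\alpha_i \ngeq \tau$. Since $A$ has non-negative entries, $\alpha_i \geq \tau$ forces $A\alpha_i \geq A\tau$, so this derivative is a $\varphi_A$-homogeneous form lying in the fiber $\sigma - A\tau \in \N^m$. By Lemma \ref{l. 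NS cond for being in I}, $\partial^{k} f/\partial T \in I_A$ if and only if $\sum_i c_i \binom{\alpha_i}{\tau} = 0$; hence Nagata-Zariski gives $f \in I_A^{(t+1)}$ if and only if
\[\sum_i c_i \binom{\alpha_i}{\tau} \;=\; 0 \qquad \text{for every } \tau \in \N^n \text{ with } |\tau| \leq t.\]
On the other hand, unwinding tensor coordinates shows $\pi^{(t)}(f)=0$ if and only if $\sum_i c_i \alpha_i^{\tau} = 0$ for every $\tau$ with $|\tau|=t$, and Lemma \ref{l. ker containement} upgrades this to the same vanishing for all $|\tau| \leq t$.

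To close, I would compare the two systems via a basis change. The families $\{\alpha^\tau : |\tau| \leq t\}$ and $\bigl\{\binom{\alpha}{\tau} : |\tau| \leq t\bigr\}$ both span the space of polynomial functions in $\alpha_1,\dots,\alpha_n$ of total degree at most $t$, since $\binom{x_1}{\tau_1}\cdots\binom{x_n}{\tau_n} = \frac{1}{\tau!}\, x^\tau + (\text{lower-order terms})$ makes the change-of-basis matrix triangular with nonzero diagonal. Consequently the Nagata-Zariski vanishing condition is equivalent to $\pi^{(t)}(f)=0$, establishing the fiberwise identity and thus the theorem (the degenerate case $\sigma=0$, where $V_{A,0}=\K$, is handled separately and contributes only the trivial summand). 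The main obstacle I anticipate is this final basis-change step; some extra bookkeeping is also needed to verify that multiplication preserves $\varphi_A$-homogeneity in the first step and that $\partial^{k} f/\partial T$ genuinely sits in the single fiber $V_{A,\sigma-A\tau}$.
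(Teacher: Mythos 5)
Your proposal is correct, and it reaches the theorem by a genuinely different route from the paper. The paper argues by induction on $t$: Nagata--Zariski is applied only to first-order derivatives, reducing $f\in I_A^{(t+1)}$ to $\partial f/\partial e_j\in I_A^{(t)}=\ker(\pi^{(t-1)})$, and the resulting conditions involving $\sum_i c_i\alpha_{i,j}(\alpha_{i,j}-1)^{b_j}\alpha_{i,u_{b_j+1}}\cdots\alpha_{i,u_{t-1}}$ are converted into $\sum_i c_i\alpha_{i,u_1}\cdots\alpha_{i,u_t}=0$ by expanding $(\alpha_{i,j}-1)^{b_j}$ with the binomial theorem and absorbing the lower-order terms using Lemma \ref{l. ker containement} and the inductive hypothesis. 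You instead apply Nagata--Zariski to all monomials $e^\tau$ with $|\tau|\le t$ at once, obtaining the finite-difference conditions $\sum_i c_i\binom{\alpha_i}{\tau}=0$, and then invoke the triangular change of basis between $\bigl\{\binom{x}{\tau}:|\tau|\le t\bigr\}$ and $\{x^\tau:|\tau|\le t\}$ (valid in characteristic zero, which the paper assumes); this packages the paper's step-by-step binomial manipulation into a single standard linear-algebra fact and removes the induction entirely. Your reduction to fibers is also more explicit than the paper's: you actually verify that $I_A^{(t+1)}$ is $\varphi_A$-graded by decomposing the coefficients in a generating expression, where the paper only asserts that it suffices to consider $\varphi_A$-homogeneous elements. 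Both arguments rest on the same supporting results (Lemmas \ref{l. generators in same fiber (t)}, \ref{l. NS cond for being in I}, \ref{l. ker containement}). One small inaccuracy, inherited from the theorem statement itself rather than introduced by you: for $t\ge1$ one has $\ker(\pi^{(t)}|_{V_{A,0}})=V_{A,0}=\K$, which is not contained in $I_A^{(t+1)}$, so the $\sigma=0$ summand is not ``trivial'' as you claim; the displayed equality is really about the positive-degree (equivalently, $\sigma\neq0$) part, a caveat the paper's own proof also passes over.
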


\begin{proof}
The proof is by induction on $t$. We first prove the case $t=1$. By Lemma \ref{l. generators in same fiber (t)}, it is enough to restrict our attention to the elements $f\in I^{(2)}$ that are $\varphi_A$-homogeneous, i.e., $f=c_1e^{\alpha_1}+\cdots+c_k e^{\alpha_k}\in V_{A,\sigma}$, for some $\sigma\in \N^m$. By the Nagata-Zariski Theorem, $f\in I_A^{(2)}$ if and only if $\frac{\partial f}{\partial e_{j}} \in I_A$ for each $j=1,\ldots,n.$ We note that \begin{equation}\label{eq.partial}
    \frac{ \partial e^{\alpha_i}}{\partial e_{j}}=\begin{cases}
    0 & \text{if } \alpha_{i,j}=0\\
   \displaystyle\alpha_{i,j}\frac{e^{\alpha_i}}{e_{j}}= \alpha_{i,j}e^{\alpha_i- \pi^{(1)}(e_j)} & \text{if } \alpha_{i,j}>0\\
\end{cases}
\end{equation} so, for short we write
\[\frac{\partial f}{\partial e_{j}} =\sum^{k}_{i=1}
c_{i}\alpha_{i,j}e^{\alpha_{i}- \pi^{(1)}(e_j)}\in I_A.\] 
Note that $\frac{\partial f}{\partial e_{j}}$ is a $\varphi_A$-homogeneous form since it belongs to $V_{A,\sigma - \varsigma_j}$ (we had denoted by $\varsigma_j=A\pi^{(1)}(e_{j})$ the $j$-th column of $A$). Then, from Lemma \ref{l. NS cond for being in I}, 
$$\frac{\partial f}{\partial e_{j}} \in I_A \text{ is equivalent to}\ 
 \sum_{i=1}^kc_i\alpha_{i,j}=0,\ \text{for }\ j=1,\ldots,n, $$ that is equivalent to, $$\sum_{i=1}^{k}c_i\alpha_i=0.$$
The latter condition is precisely $f\in  \ker(\pi^{(1)}|_{V_{A,\sigma}}).$
This concludes the case $t=1$.

\vspace{.1in}

We now assume $$I_A^{(d+1)}=  \ker(\pi^{(d)})$$
for all $d< t$ and prove the equality for $d=t$. 

\vspace{.1in}

\noindent $\bullet$ First we prove that $I_A^{(t+1)}\subseteq  \ker(\pi^{(t)}).$ 

Again, by Lemma \ref{l. generators in same fiber (t)}, it is enough to consider a  $\varphi_A$-homogeneous form $f=c_{1}e^{\alpha_{1}}+\cdots +c_{k}e^{\alpha_{k}}\in I_A^{(t+1)}$, i.e. $f\in V_{A,\sigma}$ for some $\sigma \in \N^{m}$. By the Nagata-Zariski Theorem and the induction hypothesis, we have    $$f\in I_A^{(t+1)}\ \text{if and only if}\ \frac{\partial}{\partial e_{j}}f\in I_{A}^{(t)}=\ker(\pi^{(t-1)}),\ \ \text{for}\ j=1,\ldots, n;$$ 
with, as in \eqref{eq.partial},
\[\frac{\partial f}{\partial e_j}= 
c_{1}\frac{\partial e^{\alpha_{1}}}{\partial e_j}+\cdots +c_{k}\frac{\partial e^{\alpha_{k}}}{\partial e_j}= \sum_{i=1}^{k}
c_{i}\alpha_{i,j}e^{\alpha_{i}-\pi^{(1)}(e_{j})}\in V_{A,\sigma - \varsigma_{j}}.
\]
That is
\[
0=\pi^{(t-1)}\left(\frac{\partial f}{\partial e_j}\right)=\sum_{i=1}^{k}
\left(c_{i}\alpha_{i,j}\bigotimes_{t-1}\left(\alpha_{i}-\pi^{(1)}(e_{j})\right)\right).
\]
That is, 
$$\sum_{i=1}^kc_{i}\alpha_{i,j}\left(\alpha_{i}-\pi^{(1)}(e_{j})\right)_{u_{1}}\cdots \left(\alpha_{i}-\pi^{(1)}(e_{j})\right)_{u_{t-1}}\hspace{-15pt}=0\ \text{for any $u=(u_1,\ldots, u_{t-1})\in [n]^{t-1}$}.$$
Let $b_j$ be the number of entries in $u=(u_1,\ldots, u_{t-1})$ that are equal to $j$.  If $b_j=0$ then 
$$\sum_{i=1}^{k}c_{i}\alpha_{i,j}\left(\alpha_{i}-\pi^{(1)}(e_{j})\right)_{u_{1}}\cdots \left(\alpha_{i}-\pi^{(1)}(e_{j})\right)_{u_{t-1}}=\sum_{i=1}^{k}c_{i}\alpha_{i,j}\alpha_{i,u_1}\cdots \alpha_{i,u_{t-1}}=0
.$$
If $b_j>0$ then  we assume, since the other cases are similar, that $u=(j, \ldots, j, u_{b_{j+1}},\ldots, u_{t-1})$ and so we get 
\[\begin{array}{rl}
    0=&\displaystyle  \sum_{i=1}^{k}c_{i} \alpha_{i,j} (\alpha_{i,j}-1)^{b_j}\alpha_{i,u_{b_{j+1}}}\cdots \alpha_{i,u_{t-1}} \\
     =& \displaystyle \sum_{i=1}^{k}c_{i}\alpha_{i,j}\left(\sum_{h=0}^{b_j}  \binom{b_j}{h}(-1)^{b_j-h}(\alpha_{i,j})^{h}\right)\alpha_{i,u_{b_{j+1}}}\cdots \alpha_{i,u_{t-1}} \\
     =& \displaystyle \sum_{h=0}^{b_j} \binom{b_j}{h}(-1)^{b_j-h} \sum_{i=1}^{k}c_{i}\alpha_{i,j} (\alpha_{i,j})^{h}\alpha_{i,u_{b_{j+1}}}\cdots \alpha_{i,u_{t-1}}\\
=  &  \displaystyle\sum_{h=0}^{b_j-1} \binom{b_j}{h}(-1)^{b_j-h} \sum_{i=1}^{k}c_{i} (\alpha_{i,j})^{h+1}\alpha_{i,u_{b_{j+1}}}\cdots \alpha_{i,u_{t-1}}+  \sum_{i=1}^{k}c_{i} (\alpha_{i,j})^{b_j+1}\alpha_{i,u_{b_{j+1}}}\cdots \alpha_{i,u_{t-1}}.\\   
\end{array}
\]
However, for $h<b_j$, by induction $f\in \ker(\pi^{(t-1-b_j+h)})$, therefore
$$\sum_{i=1}^{k}c_{i} (\alpha_{i,j})^{h+1}\alpha_{i,u_{b_j+1}}\cdots \alpha_{i,u_{t-1}}=0 \ \ \text{for}\ \ h<b_j.$$
Hence, from the above computation, we get 
$$\sum_{i=1}^{k}c_{i} (\alpha_{i,j})^{b_j+1}\alpha_{i,u_{b_j+1}}\cdots \alpha_{i,u_{t-1}}=0.$$
This shows that $f\in \ker(\pi^{(t)}).$

\vspace{.1in}

\noindent $\bullet$ Now we prove that $\ker(\pi^{(t)} )\subseteq I_A^{(t+1)}.$

Let  $f=c_{1}e^{\alpha_{1}}+\cdots +c_{k}e^{\alpha_{k}}\in  \ker(\pi^{(t)})$.  From Lemma \ref{l. ker containement} we have $f\in \ker(\pi^{(d)})$ for any $d\le t$ that is,
\[\sum_{i=1}^{k} c_i \bigotimes_{d} \alpha_i=0\]
and, therefore,
\begin{equation}\label{eq.ker pi d}\sum_{i=1}^k c_i \alpha_{i,j_1}\cdots \alpha_{i,j_d}=0 \ \text{for any $(j_1,\ldots, j_d)$ and $d\le t$.}   \end{equation} 
 In order to show that $f\in I^{(t+1)}_A$ it is enough to prove that $\dfrac{\partial}{\partial e_j}f\in I^{(t)}_A$ for any $j\in [n].$ 
We have, as in~\eqref{eq.partial},
\[\frac{\partial f}{\partial e_j}= 
c_{1}\frac{\partial e^{\alpha_{1}}}{\partial e_j}+\cdots +c_{k}\frac{\partial e^{\alpha_{k}}}{\partial e_j}= \sum_{i=1}^{k}
\alpha_{i,j}c_{i}e^{\alpha_{i}-\pi^{(1)}(e_{j})}.
\]
By induction, $I^{(t)}_A=\ker(\pi^{(t-1)})$, so it is enough to show that $$\pi^{(t-1)}\left(\frac{\partial f}{\partial e_{j}}\right)=0.
$$
We have
$$\pi^{(t-1)}\left(\frac{\partial f}{\partial e_{j}}\right)=\sum_{i=1}^{k}c_{i}
\alpha_{i,j}\bigotimes_{t-1}\left(\alpha_{i}-\pi^{(1)}(e_{j})\right)$$
Then, it is enough to prove that 
$$\sum_{i=1}^{k}c_{i}\alpha_{i,j}\left(\alpha_{i}-\pi^{(1)}(e_{j})\right)_{u_1}\cdots \left(\alpha_{i}-\pi^{(1)}(e_{j})\right)_{u_{t-1}}\hspace{-15pt}=0\ \text{for any $u=(u_1,\ldots, u_{t-1})\in[n]^{t-1}$}.$$
So, fix $u=(u_1,\ldots, u_{t-1})$ and let $b_j$ be the number of occurrences of $j$ in such vector. 
If $b_j=0$ then 
$$\sum_{i=1}^{k}c_{i}\alpha_{i,j}\left(\alpha_{i}-\pi^{(1)}(e_{j})\right)_{u_1}\cdots \left(\alpha_{i}-\pi^{(1)}(e_{j})\right)_{u_{t-1}}=\sum_{i=1}^{k}c_{i}\alpha_{i,j}\alpha_{i,u_1}\cdots \alpha_{i,u_{t-1}}
$$ which is $0$ since $f\in \ker(\pi^{(t)})$.
If $b_j>0$, then  we assume (the other cases are analogous) that $u=(j, \ldots, j, u_{b_{j+1}},\ldots, u_{t-1})$
We have
\begin{align*}
     \sum_{i=1}^{k}\alpha_{i,j}c_{i}&\left(\alpha_{i}-\pi^{(1)}(e_{j})\right)_{u_1}\cdots\left(\alpha_{i}-\pi^{(1)}(e_{j})\right)_{u_{t-1}}\\=&\sum_{i=1 }^{k}c_{i}\alpha_{i,j}(\alpha_{i,j}-1)^{b_j}\alpha_{i,u_{b_j+1}}\cdots\alpha_{i,u_{t-1}} \\
     =& \sum_{h=0}^{b_j} \binom{b_j}{h}(-1)^{b_j-h}\sum_{i=1 }^kc_{i}\alpha_{i,j}^{h+1}\alpha_{i,u_{b_j+1}}\cdots\alpha_{i,u_{t-1}}.
\end{align*}
This concludes the proof since, by equation \eqref{eq.ker pi d}, we have  $\displaystyle\sum_{i=1 }^kc_{i}\alpha_{i,j}^{h+1}\alpha_{i,u_{b_j+1}}\cdots\alpha_{i,u_{t-1}}=0,$ for $h\le b_j.$
\end{proof}

\begin{example}
    Let $A$ be the incidence matrix of the 4-cycle graph, $$A=\left(
    \begin{array}{cccc}
         1&0&0&1 \\
         1&1&0&0\\
         0&1&1&0\\
         0&0&1&1\\
    \end{array}\right).$$
Then the toric ideal $I_A=\ker(\varphi_A)$ is generated by only one element, namely, $f=e_1e_3-e_2e_4$. Indeed  $\varphi_A(f)=x_1x_2x_3x_4-x_1x_2x_3x_4=0$.
The form $f\in V_{A,(1,1,1,1)}$ so it is $\varphi_A$-homogeneous, moreover we have $\pi^{(0)}(f)=1- 1= 0$ and  $\pi^{(1)}(f)=(1,0,1,0)- (0,1,0,1)\neq 0.$

Let $\sigma=(2,2,2,2)$, note that   
$f^2\in V_{A,\sigma}=\langle e_1^2e_3^2, e_1e_2e_3e_4,e_2^2e_4^2 \rangle$ and 
$$\pi^{(1)}(f^2)=\pi^{(1)}(e_1^2e_3^2-2e_1e_2e_3e_4+e_2^2e_4^2)=(2,0,2,0) -2(1,1,1,1) +(0,2,0,2) =0.$$
As computed in the proof of Theorem \ref{t.t-th symb pow}, we have
$$\frac{\partial}{\partial e_{1}}f^2=2e_1e_3^2-2e_2e_3e_4\in V_{A,\sigma-(1,1,0,0)}$$
where $(1,1,0,0)$ is the first column in $A$. 
\end{example}

\begin{example}[cf. Example 2.18. in \cite{Grifo24}]\label{e. k33 second power}
    
Let $I_A=I_{K_{3,3}}$ be the toric ideal of the complete bipartite graph $K_{3,3}$. 
 The incidence matrix of $K_{3,3}$ is

$$A=\begin{pmatrix}
1 & 1 & 1 & 0 & 0 & 0 & 0 & 0 & 0  \\
0 & 0 & 0 & 1 & 1 & 1 & 0 & 0 & 0  \\
0 & 0 & 0 & 0 & 0 & 0 & 1 & 1 & 1  \\
1 & 0 & 0 & 0 & 1 & 0 & 1 & 0 & 0  \\
0 & 1 & 0 & 1 & 0 & 1 & 0 & 1 & 0  \\
0 & 0 & 1 & 1 & 0 & 0 & 0 & 0 & 1 
\end{pmatrix}.
$$
The ideal $I_A$ is generated by the nine two by two minors of the matrix
$$M=\left( \begin{array}{ccc}
   e_{1}  & e_{2}  & e_{3}  \\
   e_{4}  & e_{5}  & e_{6}  \\
   e_{7}  & e_{8}  & e_{9}  \\
\end{array} \right).$$
Consider the $\varphi_A$-homogeneous form given by the determinant of $M$
$$ 
f=|M|=e_{1}e_{5}e_{9}-e_{1}e_{6}e_{8}+
e_{2}e_{6}e_{7}-e_{2}e_{4}e_{9}+
e_{3}e_{4}e_{8}-e_{3}e_{5}e_{7}.
$$
We note that since $\deg(f)=3$ so clearly  $f\notin I^2_A.$ 
To check that $f\in I^{(2)}_A$ we use Theorem~\ref{t.t-th symb pow}. Since $$\begin{array}{rcl}
    \pi^{(1)}(f)&=& (1,0,0,0,1,0,0,0,1)-(1,0,0,0,0,1,0,1,0)+  \\
     && (0,1,0,0,0,1,1,0,0)-(0,1,0,1,0,0,0,0,1)+\\
     &&(0,0,1,1,0,0,0,1,0)-(0,0,1,0,1,0,1,0,0)\\
     &=&(0,0,0,0,0,0,0,0,0).
\end{array}$$
Thus $f\in \ker(\pi^{(1)})=I_A^{(2)}.$
\end{example}

\section{Symbolic powers of toric ideals via saturations}\label{s. via Saturations}  
Let $R=\K[e_{1}, \dots, e_n]$,  and denote by $\boldsymbol{m}=e_{1} \cdots e_n\in R$ and $\mathfrak M=(e_{1}, \dots, e_n)\subset R$. 

Recall that every $u\in \mathbb{Z}^{n}$ can be written uniquely as $u=u^+-u^-$, where $u^+$ and $u^-$ are non-negative integer vectors and have disjoint support. We set $f_u=e^{u^+}-e^{u^-}.$

Let $A\in  \mathcal{M}_{m\times n}(\mathbb{N})$ be a matrix of non-negative integers and, as in Definition \ref{d.toric}, let $I_A$ denote the toric ideal of $A$.  We assume that the columns of $A$ are non-zero vectors.  

The matrix $A$ defines, via the usual multiplication, a linear map $A: \mathbb Z^n\to \mathbb Z^m$.  We denote by $\ker(A)\subseteq \mathbb Z^n$ the kernel of the linear map defined by $A.$  For a subset $\mathcal C$ of $\ker(A)$ we define the following ideal, which is generated by the binomial forms corresponding to the elements in $\mathcal C$,  
$$J_{\mathcal C}= (f_v\ |\ v\in \mathcal C)\subseteq R.$$
Note that $J_{\mathcal C} \subseteq I_{A}.$ We recall a useful result of Sturmfels from \cite{sturmfels1996}.

\begin{lemma}[cf. Lemma 12.2 in \cite{sturmfels1996}]\label{l. Sturm96}
    A subset $\mathcal{C}$ spans the lattice $\ker(A)$ if and only if 
    \[\left(J_{\mathcal{C}}:\boldsymbol{m}^{\infty}\right)=I_{A}.\]
\end{lemma}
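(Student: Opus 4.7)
\emph{Proof sketch.} The plan is to prove both implications separately, using the standard fact (see e.g.\ \cite[Lemma~4.1]{sturmfels1996}) that $I_A$ is generated as an ideal by the binomials $\{f_u\mid u\in\ker(A)\}$, together with the character-theoretic description of the vanishing of binomials on the torus $(\K^{*})^{n}$. As a preliminary observation, $J_{\mathcal{C}}\subseteq I_{A}$ always holds, and since $I_A$ is a prime ideal containing no variable, $\boldsymbol{m}$ is a nonzerodivisor modulo $I_A$ and $(I_{A}:\boldsymbol{m}^{\infty})=I_{A}$. Therefore $(J_{\mathcal{C}}:\boldsymbol{m}^{\infty})\subseteq I_{A}$ is automatic, and only the reverse inclusion needs to be produced in the forward direction.

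For the forward direction ($\Rightarrow$), assume $\mathcal{C}$ generates $\ker(A)$ as a $\Z$-module; it suffices to show $f_u\in(J_{\mathcal{C}}:\boldsymbol{m}^{\infty})$ for every $u\in\ker(A)$. The heart of the argument is the identity
\[
e^{c_{v,w}}\,f_{v+w}\;=\;e^{w^{+}}f_{v}\;+\;e^{v^{-}}f_{w},\qquad c_{v,w}:=\min(v^{+}+w^{+},\,v^{-}+w^{-})\ \text{componentwise},
\]
which one checks by expansion, using that $(v+w)^{\pm}=v^{\pm}+w^{\pm}-c_{v,w}$. Then, writing $u=\sum_{i=1}^{N} n_{i}v_{i}$ with $v_{i}\in\mathcal{C}$ and $n_i\in\Z$, and absorbing negative coefficients via $f_{-v}=-f_{v}$, an induction on $N$ yields a monomial $M$ with $M\,f_{u}\in J_{\mathcal{C}}$. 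Since any such monomial divides a sufficiently high power of $\boldsymbol{m}=e_1\cdots e_n$, we conclude $f_{u}\in(J_{\mathcal{C}}:\boldsymbol{m}^{\infty})$.

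For the reverse direction ($\Leftarrow$) I would argue by contrapositive. Suppose $L:=\langle\mathcal{C}\rangle_{\Z}\subsetneq\ker(A)$ and pick $u\in\ker(A)\setminus L$. Because $\K$ is algebraically closed of characteristic zero, the character group $\operatorname{Hom}(\Z^{n}/L,\K^{*})$ separates points (decompose $\Z^{n}/L$ into free and torsion summands and invoke the existence in $\K^{*}$ of elements of infinite order and of primitive roots of unity of every order). Thus there is $y\in(\K^{*})^{n}$ satisfying $y^{v}=1$ for all $v\in L$ but $y^{u}\ne 1$; equivalently, $f_{v}(y)=0$ for all $v\in\mathcal{C}$ while $f_{u}(y)\ne 0$. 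Because $\boldsymbol{m}(y)\ne 0$, saturation does not affect vanishing at $y$: every $h\in(J_{\mathcal{C}}:\boldsymbol{m}^{\infty})$ must satisfy $h(y)=0$, so $y\in V(J_{\mathcal{C}}:\boldsymbol{m}^{\infty})$; on the other hand $f_{u}\in I_{A}$ and $f_u(y)\ne 0$, so $y\notin V(I_{A})$. Hence $(J_{\mathcal{C}}:\boldsymbol{m}^{\infty})\ne I_{A}$, completing the contrapositive.

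The main obstacle I anticipate is the monomial bookkeeping in the forward direction: one has to verify that the monomial factor accumulated when iterating the identity above remains supported inside $\{e_1,\dots,e_n\}$ (so divides a power of $\boldsymbol{m}$), and that the negative coefficients $n_i<0$ are dispatched cleanly by $f_{-v}=-f_v$ without altering the accumulated support. The reverse direction, once character-separation for finitely generated abelian groups over $\K$ is granted, reduces essentially to the observation that $V(J_{\mathcal{C}})\cap(\K^{*})^{n}$ is precisely the subgroup of the torus cut out by the lattice $L$, which properly contains the subgroup cut out by $\ker(A)$ whenever $L$ is a proper sublattice.
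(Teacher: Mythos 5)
Your proof is correct, and it is more complete than what the paper actually supplies: the paper imports this lemma from Sturmfels and only sketches, in the remark that follows it, the mechanism for the forward implication. That sketch is exactly your forward argument — your identity $e^{c_{v,w}}f_{v+w}=e^{w^{+}}f_{v}+e^{v^{-}}f_{w}$ is the paper's equation for $e^{u_1^++u_2^+}-e^{u_1^-+u_2^-}$ with the cancelling monomial $c_{v,w}$ made explicit, and the iteration plus the handling of integer multiples (the paper uses a telescoping factorization of $f_{\lambda u}$ where you use $f_{-v}=-f_v$ and repetition) is the same bookkeeping. Where you genuinely diverge is the reverse implication, which the paper does not prove at all: your contrapositive via a character of $\Z^{n}/L$ separating $u$ from the sublattice $L$, evaluated at a torus point $y$ where $\boldsymbol{m}(y)\neq 0$, is a clean and self-contained substitute for Sturmfels' argument through the Laurent polynomial ring (the two are essentially dual, since closed points of $\operatorname{Spec}\K[\Z^{n}/L]$ are exactly such characters). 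Your approach buys a proof readable without leaving the paper, at the cost of invoking that $\K^{*}$ has elements of infinite order and primitive roots of unity of every order — which is fine under the paper's standing hypothesis that $\K$ is algebraically closed of characteristic zero, and is exactly where a positive-characteristic version would need care for torsion in $\Z^{n}/L$.
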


\begin{remark}
In   the proof of Lemma 12.2 in \cite{sturmfels1996}, the author considers a set $\mathcal{C}=\{v_{1},\dots, v_{r}\}\subset \ker(A)$ which generates the lattice, and shows that for each $u\in\ker(A)$, integer vector, there are integers  $\lambda_{i}$ such that $$u=\sum^{r}_{i=1}\lambda_{i}v_{i}.$$ Moreover, this identity is equivalent to having 
\[
\frac{e^{u^{+}}}{e^{u^{-}}}-1=\prod^{r}_{i=1}\left(\frac{e^{v^{+}_{i}}}{e^{v^{-}_{i}}}\right)^{\lambda_{i}}-1.
\] 
Hence a monomial multiple of $f_{u}$ lies in $J_{\mathcal C},$ in particular
\[
\left(\prod^{r}_{i=1}(e^{\lambda_iv^{-}_{i}})\right) f_u=\prod^{r}_{i=1}\left(e^{\lambda_iv^{+}_{i}}\right)-\prod^{r}_{i=1}\left(e^{\lambda_iv^{-}_{i}}\right)=e^{\sum^{r}_{i=1}\lambda_iv^{+}_{i}}-e^{\sum^{r}_{i=1}\lambda_iv^{-}_{i}}.
\] 
Moreover,  the following formula holds  for any $u_1,u_2\in \mathbb Z^n$
\begin{equation}\label{eq.comb with monom}
      \begin{array}{rl}
           e^{u_{1}^{+}+u_{2}^{+}}-e^{u_{1}^{-}+u_{2}^{-}}=& e^{u_{2}^{+}}(e^{u_{1}^{+}}-e^{u_{1}^{-}})+e^{u_{1}^{-}}(e^{u_{2}^{+}}-e^{u_{2}^{-}}) \\
           =& e^{u_{2}^{+}}f_{u_1}+e^{u_{1}^{-}}f_{u_2}.
      \end{array}
\end{equation}
The binomial $e^{\sum^{r}_{i=1}v^{+}_{i}}-e^{\sum^{r}_{i=1}v^{-}_{i}}$ can be written as
\[
e^{v_1^++\sum^{r}_{i=2}v^{+}_{i}}-e^{v_1^+-\sum^{r}_{i=2}v^{-}_{i}}= e^{\sum^{r}_{i=2}v^{+}_{i}}(e^{v_1^+}-e^{v_1^-})+ e^{v_1^-}(e^{\sum^{r}_{i=2}v^{+}_{i}}-e^{\sum^{r}_{i=2}v^{-}_{i}})
\]
Therefore, by using formula \eqref{eq.comb with monom}, we can iterate the above equality to  obtain that  $f_{u}$ multiplied by some monomial is sum of the $f_{v_{i}}$ with monomial coefficients. In particular, we get

\begin{equation}\label{eq.comb with monom 2}
   \displaystyle e^{\sum^{r}_{i=1}v^{+}_{i}}-e^{\sum^{r}_{i=1}v^{-}_{i}}= \sum_{j=1}^r \left( e^{\sum^{j-1}_{i=1}v^{-}_{i}\ +\ \sum^{r}_{i=j+1}v^{+}_{i}}f_{v_j}   \right).
\end{equation}
\end{remark}

The following result is a consequence of the remark above.
\begin{corollary}\label{c.monomial multiple}
   
Let $u\in \langle v_1,\ldots, v_r \rangle\subseteq \ker(A)$,   then $$e^{\alpha}f_u=e^{\beta_1} f_{v_1}+\cdots+ e^{\beta_r} f_{v_r}$$ for some $\alpha, \beta_1, \ldots, \beta_r\in \mathbb N^n$. 
\end{corollary}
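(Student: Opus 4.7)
The plan is to reduce the statement to the explicit identity \eqref{eq.comb with monom 2} of the preceding remark, by expressing $u$ as an ordered sum of lattice vectors each equal to some $\pm v_i$ and then applying that identity to the resulting list.

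First I would fix an integer combination $u=\sum_{i=1}^r \lambda_i v_i$. Using the trivial identity $f_{-v}=-f_v$ for any $v\in\ker(A)$, the indices with $\lambda_i<0$ may be handled by replacing $v_i$ by $-v_i$ throughout and flipping the sign of the corresponding coefficient at the end, so I may assume $\lambda_i\ge 0$ for all $i$. Now I would form a sequence $w_1,\ldots, w_N$ (with $N=\sum\lambda_i$) that lists $v_i$ exactly $\lambda_i$ times, so that $\sum_k w_k = u$. A direct exponent comparison, using $u^+-u^-=\sum_k w_k^+-\sum_k w_k^-$, shows that the vector $\alpha := \sum_k w_k^- - u^-$ lies in $\mathbb N^n$ and satisfies
$$e^\alpha f_u \;=\; e^{\alpha+u^+}-e^{\alpha+u^-} \;=\; e^{\sum_k w_k^+}-e^{\sum_k w_k^-}.$$

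The right-hand side is precisely the binomial appearing on the left of \eqref{eq.comb with monom 2}, applied to the list $w_1,\ldots,w_N$. Invoking that identity yields
$$e^\alpha f_u \;=\; \sum_{j=1}^{N} e^{\,\sum_{k<j} w_k^-\ +\ \sum_{k>j} w_k^+}\,f_{w_j},$$
and since each $f_{w_j}$ equals $f_{v_{i(j)}}$ for some index $i(j)\in\{1,\ldots,r\}$, collecting the $N$ terms according to $i(j)$ expresses $e^\alpha f_u$ as a sum of monomial multiples of the $f_{v_i}$, which is the content of the corollary.

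The main technical points are the sign bookkeeping when the coefficients $\lambda_i$ are negative together with the non-negativity check that $\alpha\in\mathbb N^n$; neither step presents a real obstacle, as the identity \eqref{eq.comb with monom 2} has already done the combinatorial work in the remark.
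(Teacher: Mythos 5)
Your argument is correct and is essentially the paper's own proof: both reduce the claim to identity \eqref{eq.comb with monom 2} applied to a list of vectors summing to $u$, with the same choice $\alpha=\sum_k w_k^- - u^-$ and the same componentwise check that $\alpha\in\mathbb N^n$. The only difference is bookkeeping: the paper absorbs the integer coefficients by rescaling (setting $v_i:=\lambda_i u_i$, so that each $f_{v_i}$ receives a single monomial coefficient exactly as in the statement, the expansion of $f_{\lambda_i u_i}$ in terms of $f_{u_i}$ being deferred to Remark \ref{r. f lambda u}), whereas your listing of each $v_i$ with multiplicity $|\lambda_i|$ yields, after collecting, signed sums of monomials as coefficients --- which is harmless, since that is precisely the form in which the corollary is later used together with Remark \ref{r. f lambda u}.
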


\begin{proof}
Let $u_1,\ldots, u_r$ span $\ker(A)$ and  $u=\sum \lambda_i u_i$ for some $\lambda_i\in \mathbb Z.$ Set $v_i=\lambda_i u_i$ and let 
   $\alpha$ be such that  $$\sum^{r}_{i=1}v^{+}_{i}= \left(\sum^{r}_{i=1}v_{i}\right)^{+}+\alpha \ \text{ and } \  \sum^{r}_{i=1}v^{-}_{i}= \left(\sum^{r}_{i=1}v_{i}\right)^{-}+\alpha.$$ Then, the statement follows from equation \eqref{eq.comb with monom 2}, which gives
   \[e^{\alpha}f_u=e^{\beta_1} f_{\lambda_1u_1}+\cdots+ e^{\beta_r} f_{\lambda_ru_r}\qedhere\]
\end{proof}

\begin{remark}\label{r. f lambda u} In the main results of this section will be useful to express a binomial form $f_{\lambda u}$, where $\lambda\in \mathbb Z$, in terms of $f_u$. We have
     $$\begin{array}{rl}
         f_{\lambda u}&= sgn(\lambda)(e^{\lambda u^+}-e^{\lambda u^-})  \\[10pt]
          & =sgn(\lambda)\left(\sum\limits_{i=0}^{|\lambda|-1} {(e^{iu^+})} {(e^{(|\lambda|-1-i)u^-})}   \right)f_u\\[15pt]
          &=sgn(\lambda)\left(\sum\limits_{i=0}^{|\lambda|-1} {e^{iu + (|\lambda|-1)u^-}}   \right)f_u\\[15pt]
          &=sgn(\lambda)\left(\sum\limits_{i=0}^{|\lambda|-1} {e^{iu}}   \right)e^{(|\lambda|-1)u^-}f_u.
     \end{array}$$
\end{remark}
We show now that the second symbolic power of a toric ideal $I_A$ can be obtained by saturating $I_A^2$ with $\boldsymbol{m}.$

\begin{theorem}\label{t. sat second power}
    Let $I_A$ be a toric ideal of $R=\K[e_{1}, \dots, e_{n}]$ and $\boldsymbol{m}=e_{1}\cdots e_{n}$. Then $$\left(I_A^{2}:\boldsymbol{m}^{\infty}\right)=I_A^{(2)}.$$
\end{theorem}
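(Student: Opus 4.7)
The plan is to prove the two containments separately. The inclusion $I_A^{2}:\boldsymbol{m}^\infty\subseteq I_A^{(2)}$ is immediate: because the columns of $A$ are nonzero, $\boldsymbol{m}\notin I_A$; since $I_A$ is prime and $I_A^{(2)}$ is $I_A$-primary, $I_A^{(2)}:\boldsymbol{m}^\infty = I_A^{(2)}$, and combined with $I_A^2\subseteq I_A^{(2)}$ this yields the containment. The content of the theorem is the reverse inclusion.

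For the reverse inclusion, I would use Lemma \ref{l. generators in same fiber (t)} to reduce to showing that for any $\varphi_A$-homogeneous $f=\sum_{i=1}^k c_ie^{\alpha_i}\in V_{A,\sigma}\cap I_A^{(2)}$ there exists $N\ge0$ with $\boldsymbol{m}^Nf\in I_A^2$. Fix $\alpha_1$ as a reference exponent and set $u_i:=\alpha_i-\alpha_1\in\ker(A)$; by Theorem \ref{t.t-th symb pow} together with Remark \ref{r. NS condition for being in I}, the hypotheses $f\in I_A$ and $f\in\ker(\pi^{(1)})$ translate to $\sum c_i=0$ and $\sum c_iu_i=0$. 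Since $\boldsymbol{m}^Nf\in I_A^2$ for some $N$ is equivalent to $f\in I_A^{2}R[\boldsymbol{m}^{-1}]$, it suffices to work in the localization at $\boldsymbol{m}$. There every $e_i$ is invertible, so for any $u\in\mathbb Z^n$ we can define $e^u:=e^{u^+}/e^{u^-}$, and for $u\in\ker(A)$
\[
e^u-1=\frac{f_u}{e^{u^-}}\in I_A R[\boldsymbol{m}^{-1}].
\]
The elementary identities $e^{u+v}-1=(e^u-1)(e^v-1)+(e^u-1)+(e^v-1)$ and $e^{-u}-1=-(e^u-1)/e^u$ together show that $u\mapsto e^u-1$ descends to a group homomorphism $\ker(A)\to I_AR[\boldsymbol{m}^{-1}]/I_A^{2}R[\boldsymbol{m}^{-1}]$. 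Fix a $\mathbb Z$-basis $v_1,\ldots,v_r$ of $\ker(A)$ and write $u_i=\sum_jn_{ij}v_j$. Using $\sum c_i=0$,
\[
f = e^{\alpha_1}\sum_{i=1}^k c_i(e^{u_i}-1) \equiv e^{\alpha_1}\sum_{j=1}^r \Bigl(\sum_{i=1}^k c_i n_{ij}\Bigr)(e^{v_j}-1)\pmod{I_A^{2}R[\boldsymbol{m}^{-1}]}.
\]
The inner coefficients are the components of $\sum c_iu_i=0$ in the (linearly independent) basis $v_1,\ldots,v_r$ and thus vanish, so $f\in I_A^{2}R[\boldsymbol{m}^{-1}]$, as required.

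The main obstacle I foresee is the technical check that $u\mapsto e^u-1$ really descends to a group homomorphism modulo $I_A^{2}R[\boldsymbol{m}^{-1}]$: one needs the sign change $e^{-u}-1\equiv-(e^u-1)$ and an induction on word length to handle arbitrary integer combinations. An equivalent, more formal reformulation uses the group algebra $\mathbb Z[\ker(A)]$: the condition $\sum c_iu_i=0$ says precisely that the element $\sum c_i[u_i]$ lies in the square of the augmentation ideal, and the ring map $[u]\mapsto e^u$ sends the augmentation ideal into $I_AR[\boldsymbol{m}^{-1}]$, so its square into $I_A^{2}R[\boldsymbol{m}^{-1}]$. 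This viewpoint explains why only $\boldsymbol{m}$, and not a larger element of $\mathfrak M$, is needed for saturation, and suggests that Theorem \ref{t. sat t-th power} should follow by the same mechanism with ``square'' replaced by ``$t$-th power''.
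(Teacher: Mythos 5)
Your proof is correct, and for the nontrivial containment it takes a genuinely different route from the paper. The paper stays inside $R$: after the same reduction to a $\varphi_A$-homogeneous $f$ with $\pi^{(0)}(f)=0$ and $\pi^{(1)}(f)=0$, it invokes Sturmfels' result (Lemma \ref{l. Sturm96}) to produce explicit monomial multiples $\boldsymbol{m}^{a}(e^{\beta_i}-e^{\beta_r})=\sum_j e^{w_{i,j}}f_{\lambda_{i,j}u_j}$, rewrites each $f_{\lambda u}$ as a polynomial times $f_u$ (Remark \ref{r. f lambda u}), collects the coefficient $g_j$ of each $f_{u_j}$, and checks $\pi^{(0)}(g_j)=0$ from the relations $\sum_i c_i\lambda_{i,j}=0$ forced by $\pi^{(1)}(f)=0$ and the independence of the $u_j$. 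You instead pass to $R[\boldsymbol{m}^{-1}]=\K[e_1^{\pm1},\ldots,e_n^{\pm1}]$, use $I_A^2R[\boldsymbol{m}^{-1}]\cap R=I_A^2:\boldsymbol{m}^{\infty}$, and observe that $u\mapsto e^u-1$ induces a group homomorphism $\ker(A)\to I_AR[\boldsymbol{m}^{-1}]/I_A^2R[\boldsymbol{m}^{-1}]$, after which the same two linear conditions kill $f$. Both arguments ultimately rest on identical data (a $\mathbb Z$-basis of $\ker(A)$ and the vanishing of $\sum c_i$ and $\sum c_i\alpha_i$), but your packaging absorbs the paper's explicit binomial telescoping (equations \eqref{eq.comb with monom}--\eqref{eq.comb with monom 2} and Remark \ref{r. f lambda u}) into the single identity $e^{u+v}-1=(e^u-1)(e^v-1)+(e^u-1)+(e^v-1)$, and the steps you flag as delicate ($e^{-u}-1\equiv-(e^u-1)$ and the induction on word length) are routine from the identities you give. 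What the paper's version buys is an explicit bound on the saturation exponent and generators expressed directly in $R$, which feeds into its computational remarks; what yours buys is conceptual economy and scalability: the augmentation-ideal reformulation you sketch is essentially the structure the paper reconstructs by hand, via the much more technical Lemma \ref{l.pi^{(t-1)}}, to prove Theorem \ref{t. sat t-th power} (though carrying your argument to general $t$ requires identifying $\mathfrak a^{t-1}/\mathfrak a^{t}$ with a symmetric power of $\ker(A)$, which deserves a careful statement in its own right).
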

\begin{proof} 
Since $I_A$ is a prime ideal, the fact that $\left(I_A^{2}:\boldsymbol{m}^{\infty}\right)\subseteq I_A^{(2)}$ follows from 
   $$I_A^{(t)}=\{f \in R\ |\ fg \in I_A^t\ \text{for some }\ g \notin I_A \},$$ 
see for instance \cite[Lemma 2.2]{Grifo24}.

Now we show that  $\left(I_A^{2}:\boldsymbol{m}^{\infty}\right)\supseteq I_A^{(2)}$.

By Lemma \ref{l. generators in same fiber (t)}, there is a minimal set of generators for $I_A^{(2)}$ which are $\varphi_A$-homogeneous. Let $f$ be one of them, i.e., 
    $f=c_1e^{\beta_1}+\cdots +c_re^{\beta_r},$ for some $\beta_i\in \mathbb N^n$ and 
    $\varphi_A(e^{\beta_1})=\cdots=\varphi_A(e^{\beta_r}).$
(Note that $r>1$ since there are no monomials in $I_A$ ).
       Also, we can assume $c_i\in \mathbb Z.$
  
   By Lemma \ref{l. NS cond for being in I} we have $c_1+\cdots+c_r= 0$ and, without loss of generality, we assume $c_r\neq 0.$ Then
     $$f=c_1(e^{\beta_1}-e^{\beta_r})+\cdots +c_{r-1}(e^{\beta_{r-1}}-e^{\beta_r})$$
and $\beta_1-\beta_r, \ldots, \beta_{r-1}-\beta_r \in \ker(A).$ Let $\{u_1,\ldots, u_s\}$ be a minimal set of vectors which span $\ker(A)$.
Thus, by Lemma \ref{l. Sturm96}, there exist $v_{i,j}=\lambda_{i,j}u_j$ and a large enough non negative integer $a \in \mathbb N$ such that
$$\begin{array}{rl}
     \boldsymbol{m}^{a}(e^{\beta_1}-e^{\beta_r})= & e^{w_{1,1}}f_{v_{1,1}}+ \cdots+ e^{w_{1,s}}f_{v_{1,s}} \\
  \boldsymbol{m}^{a}(e^{\beta_2}-e^{\beta_r})=   & e^{w_{2,1}}f_{v_{2,1}}+ \cdots+ e^{w_{2,s}}f_{v_{2,s}}\\
  \vdots\\
  \boldsymbol{m}^{a}(e^{\beta_{r-1}}-e^{\beta_r})=   & e^{w_{r-1,1}}f_{v_{r-1,1}}+ \cdots+ e^{w_{r-1,s}}f_{v_{r-1,s}}\\
\end{array}.$$
Then we have 
\begin{equation}\label{eq. m^uf}
\boldsymbol{m}^af=c_1\left(\sum_{j=1}^{s} e^{w_{1,j}}f_{v_{1,j}}\right)+ \cdots + c_{r-1}\left(\sum_{j=1}^{s} e^{w_{r-1,j}}f_{v_{r-1,j}}\right).   
\end{equation}
Since $\boldsymbol{m}^af\in I^{(2)}$, by  Theorem \ref{t.t-th symb pow}, we have $$c_1\left(\sum_{j=1}^{s} ({w_{1,j}}+{v_{1,j}^+})- ({w_{1,j}}+{v_{1,j}^-})\right)+ \cdots + c_{r-1}\left(\sum_{j=1}^{s} ({w_{r-1,j}}+{v_{r-1,j}^+})- ({w_{r-1,j}}+{v_{r-1,j}^-})\right)= 0,$$ that is
$$c_1 \sum_{j=1}^s(v_{1,j})+ \cdots + c_{r-1} \sum_{j=1}^s(v_{r-1,j})=c_1 \sum_{j=1}^s(\lambda_{1,j}u_{j})+ \cdots + c_{r-1} \sum_{j=1}^s(\lambda_{r-1,j}u_{j}) = 0.$$
And since $u_1,\ldots, u_s$ are linearly independent we get
\begin{equation}\label{eq. sum lambda is 0}
    \sum_{i=1}^{r-1}c_i\lambda_{i,1}=\cdots=  \sum_{i=1}^{r-1}c_{i}\lambda_{i,s} = 0.
\end{equation} 
Now we write
$$\boldsymbol{m}^af=\sum_{t=1}^s (c_1e^{w_{1,t}}f_{v_{1,t}}+\cdots +c_{r-1}e^{w_{r-1,t}}f_{v_{r-1,t}}).$$
By Remark \ref{r. f lambda u}, we have 
$$\boldsymbol{m}^af=g_1f_{u_1}+\cdots g_sf_{u_s}$$
for certain forms $g_1,\ldots, g_s$. In particular, from the above computation, $g_1$ is the following

{\small$$g_1=\sum_{i=1}^{r-1}\left(c_ie^{w_{i,1}}sgn(\lambda_{i,1})\left(\sum_{j=0}^{|\lambda_{i,1}|-1} {e^{ju_1}}   \right)e^{(|\lambda_{i,1}|-1){u_i}^-}\right).$$
}
Moreover we have
\[\begin{array}{rl}
     \pi^{(0)}(g_1)=&\displaystyle \pi^{(0)}\left(\sum_{i=1}^{r-1}\left(c_ie^{w_{i,1}}sgn(\lambda_{i,1})\left(\sum_{j=0}^{|\lambda_{i,1}|-1} {e^{ju_1}}   \right)e^{(|\lambda_{i,1}|-1){u_i}^-}\right)\right)= \sum_{i=1}^{r-1}c_isgn(\lambda_{i,1})|\lambda_{i,1}| \\
     =&\displaystyle \sum_{i=1}^{r-1}c_i\lambda_{i,1}=0,
\end{array}\]
where the last equality is the condition in \eqref{eq. sum lambda is 0}. Hence, by Remark \ref{r. NS condition for being in I} we have $g_1\in I_A$. In a similar way we see that also $g_2, \ldots, g_s\in I_A$ and therefore $\boldsymbol{m}^af\in I^2_A.$
\end{proof}

\begin{example}
Here we illustrate the proof of Theorem \ref{t. sat second power} with respect the toric ideal $I_A$ as in Example~\ref{e. k33 second power}.  
A set of generators for $\ker(A)$ is
$$\begin{array}{rrrrrrrrrrr}
&{}_1&{}_2&{}_3&{}_4&{}_5&{}_6&{}_7&{}_8&{}_9\ \\
     u_1=&(\ 1,&  -1,&  0, & -1, & 1, & 0, & 0, & 0,&  0\ ) \\
     u_2=&(\ 0,& 1,& -1,& 0, &-1,& 1,& 0, &0, &0\ )\\
     u_3=&(\ 0, &0,& 0,& 1,& 0,& -1,& -1,& 0, &1\ )\\
u_4=&(\ 0, &0, &0,& 0,& 1, &-1,& 0, &-1,& 1\ )\\
\end{array}$$ 
The form $f$ is $\varphi_A$-homogeneous, indeed each monomial in $f$ is mapped by $\varphi_A$ into $x_1x_2x_3y_1y_2y_3$.

In order to write $f$ as in the proof of Theorem \ref{t. sat second power}, we set $$\begin{array}{rrrrrrrrrrr}
&{}_1&{}_2&{}_3&{}_4&{}_5&{}_6&{}_7&{}_8&{}_9\ \\

 v_1=&(\ 1,&0,&0,&0,&1,&0,&0,&0,&1\ )\\
v_{2}=&(\ 1,&0,&0,&0,&0,&1,&0,&1,&0\ )\\

  v_{3}=&(\ 0,&1,&0,&0,&0,&1,&1,&0,&0\ )\\
  v_4=&(\ 0,&1,&0,&1,&0,&0,&0,&0,&1\ )\\  

  v_5=&(\ 0,&0,&1,&1,&0,&0,&0,&1,&0\ )\\ 
 v_{6}=&(\ 0,&0,&1,&0,&1,&0,&1,&0,&0\ )  
\end{array}
$$ 
Thus, we have 
$$f=e^{v_1}-e^{v_2}+ e^{v_3}-e^{v_4}+e^{v_{5}}-e^{v_{6}}.$$
Since $v_1-v_2+v_3-v_4+v_5-v_6=0$, from Theorem \ref{t.t-th symb pow}, we have $f\in I_A^{(2)}$. 
We show now that $f\in I_A^2:\boldsymbol{m}^{\infty}$
$$f=(e^{v_1}-e^{v_{6}})-(e^{v_2}-e^{v_{6}})+ (e^{v_3}-e^{v_{6}})-(e^{v_4}-e^{v_{6}})+(e^{v_{5}}-e^{v_{6}}).$$
We have
$$\begin{array}{rl}
v_1-v_6=&u_1+u_2+u_3  \\
v_2-v_6=&u_1+u_2+u_3-u_4 \\ 
v_3-v_6=&u_2\\
v_4-v_6=&u_2+u_3\\
v_5-v_6=&u_3-u_4
\end{array}$$
and 

$$\begin{array}{rl}
e_6(e^{v_1}-e^{v_6})=&e_6e_9f_{u_1}+e_4e_9f_{u_2}+e_3e_5f_{u_3}  \\

e_5e_9(e^{v_2}-e^{v_6})=&
e_6e_8e_9f_{u_1}+
e_4e_8e_9f_{u_2}+
e_3e_5e_8f_{u_3}-e_3e_5e_7f_{u_4} \\ 

e^{v_3}-e^{v_6}=&e_7f_{u_2}\\

e_6(e^{v_4}-e^{v_6})=&e_4e_9f_{u_2}+ e_3e_5f_{u_3}\\

e_6(e^{v_5}-e^{v_6})=&-e_3e_4f_{u_4}+ e_3e_5f_{u_3}
\end{array}$$
hence
$$
\begin{array}{rl}
  e_5e_6e_9f =& e_6e_9(e_5e_9-e_6e_8)f_{u_1}+  \\
     & e_9(\cancel{e_4e_5e_9}-e_4e_6e_8+e_5e_6e_7-\cancel{e_4e_5e_9})f_{u_2}+ \\
     &e_5(\cancel{e_3e_5e_9}-e_3e_6e_8-\cancel{e_3e_5e_9}+e_3e_5e_9)f_{u_3}\\
     &e_3e_5(e_6e_7-e_4e_9)f_{u_4}\in I_A^2,
\end{array}
$$
since, as noticed in Example \ref{e. k33 second power}, $e_5e_9-e_6e_8, -e_4e_6e_8+e_5e_6e_7, e_3e_5e_9-e_3e_6e_8, e_6e_7-e_4e_9\in I_A.$
\end{example}
In order to prove the main result of this section we need the following technical lemma.

\begin{lemma}\label{l.pi^{(t-1)}}Let $A\in\mathcal{M}_{m\times n}(\mathbb{N})$, $t\ge2$ and $v_1, \ldots, v_{t}\in \ker(A)$ integer vectors and consider the correspondent binomials $f_{v_1}, \ldots f_{v_{t}}$. Then, for any  $\beta$ a vector of non-negative integers, and $ \Sym{t}$ the symmetric group on $[t]$, we have
    $$\pi^{(t)}(e^{\beta} {f_{v_{1}}\cdots f_{v_{t}}})=\pi^{(t)}({f_{v_{1}}\cdots f_{v_{t}}})=\sum_{\omega\in \Sym{t}} v_{\omega(1)}\otimes \cdots\otimes v_{\omega(t)}.$$
Moreover,  if  $\ker(A)=\langle u_1, \ldots, u_s \rangle $ and $v_i=\lambda_{1,i} u_1 +\cdots+ \lambda_{s,i} u_s$ for each $i=1,\ldots, t,$ then 

 $$\pi^{(t)}({f_{v_{1}}\cdots f_{v_{t}}})=\sum_{p\in [s]^{t}}\left( \prod_{i=1}^{t} \lambda_{p_i,i}\right)  u_{p_1}\otimes \cdots\otimes u_{p_{t}}.  $$
\end{lemma}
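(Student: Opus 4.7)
The plan is to expand $f_{v_1}\cdots f_{v_t}$ explicitly as a signed sum of $2^t$ monomials, apply $\pi^{(k)}$ using $\K$-linearity, distribute the resulting tensor powers, and then interchange summations. The key auxiliary claim, which I will establish along the way, is that $\pi^{(k)}(f_{v_1}\cdots f_{v_t})=0$ for every $k<t$; both halves of the lemma then fall out of this.

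Writing $f_{v_i}=e^{v_i^+}-e^{v_i^-}$ and expanding the product yields
$$\pi^{(k)}(f_{v_1}\cdots f_{v_t})=\sum_{(j_1,\ldots,j_k)\in[t]^k}\ \sum_{\epsilon\in\{+,-\}^t}\left(\prod_{i=1}^t sgn(\epsilon_i)\right)v_{j_1}^{\epsilon_{j_1}}\otimes\cdots\otimes v_{j_k}^{\epsilon_{j_k}}.$$
For a fixed tuple $(j_1,\ldots,j_k)$, if some $\ell\in[t]$ is missing from $\{j_1,\ldots,j_k\}$ then $\epsilon_\ell$ does not appear in the tensor factor, so the inner sum factors out $\sum_{\epsilon_\ell\in\{+,-\}}sgn(\epsilon_\ell)=0$. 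Hence only tuples with $\{j_1,\ldots,j_k\}=[t]$ can contribute; for $k<t$ no such tuple exists and the whole sum is zero. For $k=t$ the surviving tuples are exactly the permutations $\omega\in \Sym{t}$, and for each such $\omega$ the remaining sum over $\epsilon$ factors position by position as $\bigotimes_{i=1}^{t}(v_{\omega(i)}^+-v_{\omega(i)}^-)=v_{\omega(1)}\otimes\cdots\otimes v_{\omega(t)}$, giving the first asserted equality.

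For the $e^{\beta}$-invariance, write $f_{v_1}\cdots f_{v_t}=\sum_\alpha c_\alpha e^\alpha$, so $\pi^{(t)}(e^\beta f_{v_1}\cdots f_{v_t})=\sum_\alpha c_\alpha(\alpha+\beta)^{\otimes t}$. Multilinearly expand $(\alpha+\beta)^{\otimes t}=\sum_{S\subseteq[t]}\xi^S_1\otimes\cdots\otimes\xi^S_t$, where $\xi^S_i=\alpha$ if $i\in S$ and $\xi^S_i=\beta$ otherwise, and interchange the sums. The contribution from each $S$ of size $k$ is a tensor built from the order-$k$ tensor $\sum_\alpha c_\alpha\alpha^{\otimes k}=\pi^{(k)}(f_{v_1}\cdots f_{v_t})$ inserted at the positions in $S$, with $\beta$-factors at the remaining positions. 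By the vanishing just proved, every $|S|<t$ drops out and only $S=[t]$ contributes, yielding $\pi^{(t)}(f_{v_1}\cdots f_{v_t})$. To pass to the generators, substitute $v_i=\sum_j\lambda_{j,i}u_j$ into each tensor factor $v_{\omega(i)}$ in the formula of the previous paragraph and apply multilinearity of the tensor product; a reindexing of the resulting sum over $[s]^t$ produces the displayed expression.

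I expect the main obstacle to be the first step: keeping the $2^t$ sign patterns and the $t^k$ index tuples organized well enough to see that only tuples whose image is all of $[t]$ survive. Once the vanishing of $\pi^{(k)}(f_{v_1}\cdots f_{v_t})$ for $k<t$ is in hand, both the main identity and the $e^\beta$-invariance come out of the same mechanism, and the reduction to the spanning set $u_1,\ldots,u_s$ is a routine multilinear reorganization.
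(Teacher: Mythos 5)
Your treatment of the two displayed equalities in the first part is correct, and it takes a genuinely different and cleaner route than the paper. The paper proves $\pi^{(t)}(e^{\beta}f_{v_1}\cdots f_{v_t})=\pi^{(t)}(f_{v_1}\cdots f_{v_t})$ by pairing summands over opposite sign patterns, and then establishes the symmetrized formula by an induction on $t$ that adds and subtracts tensors so as to reduce to products involving $f_{v_1+v_2}$. Your single observation --- that in $\sum_{(j_1,\ldots,j_k)}\sum_{\epsilon}$ any tuple $(j_1,\ldots,j_k)$ missing an index $\ell$ is annihilated by the factor $\sum_{\epsilon_\ell\in\{+,-\}}sgn(\epsilon_\ell)=0$ --- yields the vanishing $\pi^{(k)}(f_{v_1}\cdots f_{v_t})=0$ for all $k<t$, the $k=t$ formula, and (via the $S\subseteq[t]$ expansion) the $e^{\beta}$-invariance in one stroke, with no induction. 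The explicit lower-order vanishing is a useful byproduct the paper never isolates.

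The final step, however, does not work as you describe, and the obstruction is worth naming: substituting $v_i=\sum_j\lambda_{j,i}u_j$ into $\sum_{\omega}v_{\omega(1)}\otimes\cdots\otimes v_{\omega(t)}$ and expanding multilinearly gives $\sum_{p\in[s]^{t}}\bigl(\sum_{\omega\in \Sym{t}}\prod_{i=1}^{t}\lambda_{p_i,\omega(i)}\bigr)\,u_{p_1}\otimes\cdots\otimes u_{p_{t}}$, and no reindexing turns this into the displayed $\sum_{p\in[s]^{t}}\bigl(\prod_{i=1}^{t}\lambda_{p_i,i}\bigr)\,u_{p_1}\otimes\cdots\otimes u_{p_{t}}$: the two tensors genuinely differ. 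For $t=2$, $s=1$, $v_1=\lambda_1u_1$, $v_2=\lambda_2u_1$, the first part of the lemma gives $v_1\otimes v_2+v_2\otimes v_1=2\lambda_1\lambda_2\,u_1\otimes u_1$, whereas the displayed formula gives $\lambda_1\lambda_2\,u_1\otimes u_1$. So the ``moreover'' formula as stated is inconsistent with the first identity; it is missing the symmetrization over $\Sym{t}$, and the paper's own one-line justification commits the same slip by silently replacing $\sum_\omega v_{\omega(1)}\otimes\cdots\otimes v_{\omega(t)}$ with $v_1\otimes\cdots\otimes v_t$. You should prove the symmetrized version written above instead of claiming the stated one; note that the symmetrized version is also what the application in the proof of Theorem \ref{t. sat t-th power} actually requires, since the coefficient $\pi^{(0)}(G_P)$ extracted there is a sum over all permutations of $P$.
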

\begin{proof}
First we show that \begin{equation}\label{eq. get rid of monomial}
    \pi^{(t)}(e^{\beta} {f_{v_{1}}\cdots f_{v_{t}}})=\pi^{(t)}({f_{v_{1}}\cdots f_{v_{t}}}).
\end{equation}
We have \begin{align*}
    \pi^{(t)}(e^{\beta} {f_{v_{1}}\cdots f_{v_{t}}})=&\pi^{(t)}\left( (e^{v_1^++\beta}-e^{v_1^-+\beta})(e^{v_2^+}-e^{v_2^-})\cdots (e^{v_t^+}-e^{v_t^-})\right)=\\
    =&\sum_{\gamma\in \{- 1,+1\}^{t}}|\gamma|\bigotimes_{t}(v_1^{(\gamma_1)}+\beta +v_2^{(\gamma_2)}+\cdots +v_{t}^{(\gamma_{t})})\\
 \intertext{\small where $\gamma=(\gamma_1,\cdots, \gamma_t)\in \{+1,-1\}^{t}$,  $|\gamma|= \gamma_1\cdots \gamma_{t}$ and the exponent ${(\gamma_i)}$ is $+/-$ when $\gamma_i$ is $+1/-1$}
 =&\sum_{\gamma\in \{- 1,+1\}^{t}}|\gamma|\bigotimes_{t}(v_1^{(\gamma_1)} +v_2^{(\gamma_2)}+\cdots +v_{t}^{(\gamma_{t})})+  \text{terms involving } \beta.
\end{align*} 
However, all the terms containing $\beta$ cancel out. Indeed,  in a tensor product of $t$ elements among $\beta, v_1^{(\gamma_1)}, \ldots, v_t^{(\gamma_t)}$ at least one index between 1 and $t$ does not appear, say $v_1^{(\gamma_1)}$. Thus the same summands but with opposite sign are found in correspondence of $\gamma =(+1,\gamma_2,\ldots, \gamma_t)$ and  $\gamma' =(-1,\gamma_2,\ldots, \gamma_t)$.

Now we compute $\pi^{(t)}\left(f_{v_{1}}\cdots f_{v_{t}}\right).$

 For  $t=2$ we have 
\begin{align*}\pi^{(2)}( {f_{v_{1}}f_{v_{2}}})&=\pi^{(2)}\left(\left(e^{v_1^+}-e^{v_1^-}\right) \left(e^{v_{2}^+}-e^{v_{2}^-}\right)\right) \\&=\pi^{(2)}\left(e^{v_1^++v_2^+}-e^{v_1^++v_2^-}-e^{v_1^-+v_2^+}+e^{v_1^-+v_2^-}\right) \\&=\bigotimes_{2}\left(v_1^++v_2^+\right)-\bigotimes_{2}\left(v_1^++v_2^-\right)-\bigotimes_{2}\left(v_1^-+v_2^+\right)+\bigotimes_{2}\left(v_1^-+v_2^-\right)\\  &=v_1\otimes v_2 +v_2\otimes v_1\end{align*}
where the last equality is obtained by a straightforward calculation. 

Let $t\ge 3$
 \begin{align*}
   (\star)\ \  \pi^{(t)}({f_{v_{1}}\cdots f_{v_{t}}})=&\pi^{(t)}( (e^{v_1^+}-e^{v_1^-})(e^{v_2^+}-e^{v_2^-})\cdots (e^{v_t^+}-e^{v_t^-}))=\\
    =&\sum_{\gamma\in \{- 1,+1\}^{t}}|\gamma|\bigotimes_{t}\left(v_1^{(\gamma_1)}+v_2^{(\gamma_2)}+\cdots +v_{t}^{(\gamma_{t})}\right)\\
 \intertext{ below, to shorten the notation, we set $\gamma'=(\gamma_3,\ldots, \gamma_t)$ and $w_{\gamma'}=v_3^{(\gamma_3)}+\cdots +v_{t}^{(\gamma_{t})}$ }
 =&\sum_{\gamma'\in \{- 1,+1\}^{t-2}}|\gamma'|\left(v_1^+\otimes\bigotimes_{t-1}(v_1^{+} +v_2^{+}+w_{\gamma'})-v_1^+\otimes\bigotimes_{t-1}(v_1^{+} +v_2^{-}+w_{\gamma'})\right.\\
 &\hspace{15pt}- \left. v_1^-\otimes\bigotimes_{t-1}(v_1^{-} +v_2^{+}+w_{\gamma'})+v_1^-\otimes\bigotimes_{t-1}(v_1^{-} +v_2^{-}+w_{\gamma'})\right)+\cdots.
\end{align*}
The omitted terms have as a first entry a vector with index equal or greater than 2.  We are focusing on the above four terms since the computation for the other terms is analogous. We have 
$$v_1^+\hspace{-3pt}\otimes\bigotimes_{t-1}(v_1^{+} +v_2^{+}+w_{\gamma'})-v_1^+\hspace{-3pt}\otimes\bigotimes_{t-1}(v_1^{+} +v_2^{-}+w_{\gamma'}) -  v_1^-\hspace{-3pt}\otimes\bigotimes_{t-1}(v_1^{-} +v_2^{+}+w_{\gamma'})+v_1^-\hspace{-3pt}\otimes\bigotimes_{t-1}(v_1^{-} +v_2^{-}+w_{\gamma'})$$
by adding and subtracting a same term we get
 \begin{align*}
 =&\phantom{+\ } v_1^+\otimes\bigotimes_{t-1}(v_1^{+} +v_2^{+}+w_{\gamma'})-v_1^-\otimes\bigotimes_{t-1}(v_1^{+} +v_2^{+}+w_{\gamma'})\\
 &-v_1^+\otimes\bigotimes_{t-1}(v_1^{+} +v_2^{-}+w_{\gamma'})+v_1^+\otimes\bigotimes_{t-1}(v_1^{-} +v_2^{-}+w_{\gamma'}) \\
 &-  v_1^-\otimes\bigotimes_{t-1}(v_1^{-} +v_2^{+}+w_{\gamma'})+v_1^-\otimes\bigotimes_{t-1}(v_1^{+} +v_2^{+}+w_{\gamma'})\\
 &+v_1^-\otimes\bigotimes_{t-1}(v_1^{-} +v_2^{-}+w_{\gamma'})-v_1^+\otimes\bigotimes_{t-1}(v_1^{-} +v_2^{-}+w_{\gamma'})\\
 =&\phantom{+\ } v_1\otimes\bigotimes_{t-1}(v_1^+ +v_2^{+}+w_{\gamma'})\\
 &-v_1^+\otimes\bigotimes_{t-1}(v_1^{+} +v_2^{-}+w_{\gamma'})+v_1^+\otimes\bigotimes_{t-1}(v_1^{-} +v_2^{-}+w_{\gamma'}) \\
 &-  v_1^-\otimes\bigotimes_{t-1}(v_1^{-} +v_2^{+}+w_{\gamma'})+v_1^-\otimes\bigotimes_{t-1}(v_1^{+} +v_2^{+}+w_{\gamma'})\\
 &-v_1\otimes\bigotimes_{t-1}(v_1^{-} +v_2^{-}+w_{\gamma'})\\
  \intertext{  taking the sum over all the $\gamma'$,  we get}
 \sum_{\gamma'\in \{- 1,+1\}^{t-2}}|\gamma'|&\phantom{+\ } \big(v_1\otimes\bigotimes_{t-1}(v_1^+ +v_2^{+}+w_{\gamma'})-v_1\otimes\bigotimes_{t-1}(v_1^{-} +v_2^{-}+w_{\gamma'})\\
 &-v_1^+\otimes\bigotimes_{t-1}(v_1^{+} +v_2^{-}+w_{\gamma'})+v_1^+\otimes\bigotimes_{t-1}(v_1^{-} +v_2^{-}+w_{\gamma'}) \\
 &-  v_1^-\otimes\bigotimes_{t-1}(v_1^{-} +v_2^{+}+w_{\gamma'})+v_1^-\otimes\bigotimes_{t-1}(v_1^{+} +v_2^{+}+w_{\gamma'})\big)\\
 \intertext{ then by applying formulas $(\star)$ and \eqref{eq. get rid of monomial}, we get}
 =&\hspace{5pt} v_1\otimes \pi^{(t-1)}(f_{v_1+v_2}f_{v_3}\cdots f_{v_t}) -v_1^+\otimes \pi^{(t-1)}(f_{v_1}f_{v_3}\cdots f_{v_t})+v_1^-\otimes \pi^{(t-1)}(f_{v_1}f_{v_3}\cdots f_{v_t}) \\
 =&\hspace{5pt} v_1\otimes \pi^{(t-1)}(f_{v_1+v_2}f_{v_3}\cdots f_{v_t})-v_1\otimes \pi^{(t-1)}(f_{v_1}f_{v_3}\cdots f_{v_t}). \end{align*}
Thus, by induction and using the distributive property of the tensor property, we are done.

The last part of the statement follows from the previous part and since 
 \[\pi^{(t)}( {f_{v_{1}}\cdots f_{v_{t}}})=  
 \pi^{(t)}\left(\prod_{i=1}^{t} \left(\lambda_{1,i} u_1 +\cdots+ \lambda_{s,i} u_s\right)\right) =\sum_{p\in [s]^{t}}\left( \prod_{i=1}^{t} \lambda_{p_i,i}\right)  u_{p_1}\otimes \cdots\otimes u_{p_{t}}.\qedhere\]
\end{proof}

\begin{theorem}\label{t. sat t-th power}
    Let $I_A$ be a toric ideal of $R=\K[e_{1}, \dots, e_{n}]$ and ${\bf m}=e_{1}\cdots e_{n}$. Then $$\left(I_A^{t}:{\bf m}^{\infty}\right)=I_A^{(t)}.$$
\end{theorem}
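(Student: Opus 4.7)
The strategy is induction on $t$, the base case $t=2$ being Theorem~\ref{t. sat second power}. The containment $(I_A^{t}:\boldsymbol{m}^{\infty})\subseteq I_A^{(t)}$ follows immediately from the primality of $I_A$ and the characterisation $I_A^{(t)}=\{f\in R\ |\ fg\in I_A^{t}\text{ for some }g\notin I_A\}$, since no positive power of $\boldsymbol{m}$ lies in $I_A$, exactly as in Theorem~\ref{t. sat second power}.

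For the reverse containment $I_A^{(t)}\subseteq I_A^{t}:\boldsymbol{m}^{\infty}$, take $f\in I_A^{(t)}$, which by Lemma~\ref{l. generators in same fiber (t)} we may assume is $\varphi_A$-homogeneous. Copying the opening of the proof of Theorem~\ref{t. sat second power} verbatim, we fix a spanning set $\{u_1,\ldots,u_s\}$ of $\ker(A)$, write $f=\sum_{i=1}^{r-1}c_i(e^{\beta_i}-e^{\beta_r})$ with each $\beta_i-\beta_r\in\ker(A)$, and apply Lemma~\ref{l. Sturm96}, Corollary~\ref{c.monomial multiple}, and Remark~\ref{r. f lambda u} to obtain, for sufficiently large $a$, the decomposition
$$\boldsymbol{m}^{a}f=\sum_{j=1}^{s}g_j f_{u_j},\quad g_j=\sum_{i=1}^{r-1}c_i e^{w_{i,j}}sgn(\lambda_{i,j})\left(\sum_{k=0}^{|\lambda_{i,j}|-1}e^{ku_j}\right)e^{(|\lambda_{i,j}|-1)u_j^-}.$$

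The key claim is that each $g_j$ lies in $I_A^{(t-1)}$. Once this is established, the inductive hypothesis furnishes $b_j$ with $\boldsymbol{m}^{b_j}g_j\in I_A^{t-1}$, and setting $B=\max_j b_j$ gives
$$\boldsymbol{m}^{a+B}f=\sum_{j=1}^{s}\boldsymbol{m}^{B-b_j}(\boldsymbol{m}^{b_j}g_j)f_{u_j}\in I_A^{t-1}\cdot I_A=I_A^{t},$$
so $f\in I_A^{t}:\boldsymbol{m}^{\infty}$, closing the induction.

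To establish the claim, Theorem~\ref{t.t-th symb pow} reduces matters to verifying $\pi^{(t-2)}(g_j)=0$. The hypothesis $\boldsymbol{m}^{a}f\in I_A^{(t)}$ yields $\pi^{(d)}(\boldsymbol{m}^{a}f)=0$ for every $d\le t-1$ by Lemma~\ref{l. ker containement}. Expanding these vanishing tensors through the displayed decomposition and using the $\mathbb{Z}$-linear independence of $u_1,\ldots,u_s$ should extract a family of scalar identities
$$\sum_{i=1}^{r-1}c_i\lambda_{i,j_1}\cdots\lambda_{i,j_d}=0\qquad\text{for every }(j_1,\ldots,j_d)\in[s]^d,\ 1\le d\le t-1,$$
generalising the single relation $\sum_i c_i\lambda_{i,j}=0$ from Theorem~\ref{t. sat second power} ($d=1$). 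A direct computation of $\pi^{(t-2)}(g_j)$, by expanding the formula above termwise, should then reveal that every coordinate of the resulting tensor is a $\mathbb{Z}$-linear combination of the quantities $\sum_i c_i\lambda_{i,j_1}\cdots\lambda_{i,j_d}$, and hence vanishes.

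The chief obstacle is this final computation. The geometric sum $\sum_{k=0}^{|\lambda_{i,j}|-1}e^{ku_j}$, together with the ancillary monomials $e^{w_{i,j}}$ and $e^{(|\lambda_{i,j}|-1)u_j^-}$, produces a proliferation of tensor cross-terms under $\pi^{(t-2)}$. I expect the cancellation mechanism from the proof of Lemma~\ref{l.pi^{(t-1)}}, in which contributions involving an extra monomial exponent drop out in pairs with opposite signs, to transfer to the present setting and eliminate the terms containing $w_{i,j}$ and $u_j^-$, leaving only pure tensors in the $u_j$ whose coefficients are precisely the vanishing sums identified above.
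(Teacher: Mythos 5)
Your overall architecture is sound up to one point, but that point is the heart of the matter and it is left unproved. The reduction is valid: if each $g_j$ in the decomposition $\boldsymbol{m}^a f=\sum_{j=1}^s g_jf_{u_j}$ lies in $I_A^{(t-1)}$, then the inductive hypothesis finishes the proof, and the scalar identities $\sum_i c_i\lambda_{i,j_1}\cdots\lambda_{i,j_d}=0$ for all $d\le t-1$ do follow from $f\in I_A^{(t)}=\ker(\pi^{(t-1)})$ together with Lemma \ref{l. ker containement} and the linear independence of the tensors $u_{j_1}\otimes\cdots\otimes u_{j_d}$. What is missing is the verification that $\pi^{(t-2)}(g_j)=0$, which you defer with ``should then reveal'' and ``I expect.'' This is not a routine cancellation. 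Expanding $\pi^{(d)}(g_j)$ produces power sums $\sum_{k=0}^{|\lambda_{i,j}|-1}k^p$ and, through the exponents $w_{i,j}$ (which contain the vectors $v_{i,l}^{\pm}=(\lambda_{i,l}u_l)^{\pm}$), quantities of the form $\sum_i c_i\lambda_{i,j}|\lambda_{i,l_1}|\cdots$ involving absolute values and signs; these are \emph{not} among the vanishing sums $\sum_i c_i\lambda_{i,j_1}\cdots\lambda_{i,j_d}$, and their elimination requires exploiting the precise telescoping form of $w_{i,j}=a\mathbf{1}+\beta_i-\sum_{l<j}v_{i,l}-v_{i,j}^+$. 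I checked that for $t=3$ the claim $\pi^{(1)}(g_j)=0$ does hold: the term $\frac12\bigl(\sum_ic_i\lambda_{i,j}|\lambda_{i,j}|\bigr)(u_j^++u_j^-)$ coming from the geometric sum cancels exactly against the contribution of $v_{i,j}^+$ inside $w_{i,j}$, using the degree-two identities. But for general $t$ and $d=t-2$ the bookkeeping of cross-terms is a substantial computation that your proposal does not carry out, and the cancellation mechanism of Lemma \ref{l.pi^{(t-1)}} does not transfer directly, since that lemma computes $\pi^{(t)}$ of a product of exactly $t$ binomials, whereas here $g_j$ is a single polynomial that is not a product of binomials.

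The paper takes a different route precisely to avoid this. It uses the inductive hypothesis on $f$ itself (since $f\in I_A^{(t)}\subseteq I_A^{(t-1)}$) to write $\boldsymbol{m}^af$ as a sum of terms $c_je^{\beta_j}f_{v_1^{(j)}}\cdots f_{v_{t-1}^{(j)}}\in I_A^{t-1}$, re-expands each binomial factor in the basis binomials $f_{u_1},\ldots,f_{u_s}$, and collects the coefficient $G_P$ of each product $F_P=\prod_{i\in P}f_{u_i}$ with $P\in[s]^{t-1}$. Since $F_P$ already carries all $t-1$ binomial factors, one only needs $G_P\in I_A$, i.e.\ the single scalar condition $\pi^{(0)}(G_P)=0$; this is exactly the identity $\sum_jc_j\prod_i\lambda_{p_i,i}^{(j)}=0$ extracted from $\pi^{(t-1)}(\boldsymbol{m}^af)=0$ via Lemma \ref{l.pi^{(t-1)}}. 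In other words, the paper concentrates the ``symbolic depth'' in the product of basis binomials and asks only for order-zero vanishing of the coefficients, whereas your decomposition concentrates it in the single coefficient $g_j$ and therefore requires order-$(t-2)$ vanishing, which is the hard part you have not supplied. To salvage your approach you would need to prove the claim $\pi^{(t-2)}(g_j)=0$ in full generality; as it stands, the proof is incomplete.
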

\begin{proof} 
Since $I_A$ is a prime ideal, the fact that $\left(I_A^{t}:{\bf m}^{\infty}\right)\subseteq I_A^{(t)}$ follows from Remark 1.16 in \cite{G18} which claims 
   $$I_A^{(t)}=\{f \in R\ |\ fg \in I_A^t\ \text{for some }\ g \notin I_A \}.$$ 
Now we show that  $\left(I_A^{t}:{\bf m}^{\infty}\right)\supseteq I_A^{(t)}$.

By Lemma \ref{l. generators in same fiber (t)}, there is a minimal set of generators for $I_A^{(t)}$ which are $\varphi_A$-homogeneous and with all the coefficients in $\mathbb Z$. Let $f$ be one of them. 

We proceed by induction. The case $t=2$ is Theorem \ref{t. sat second power}, so we assume $t\ge 3$. The idea of the proof will be to first express a monomial multiple of $f$ as an element in $I_{A}^{t-1}$ that is as a sum of products of $t-1$ elements in $I_{A}$. We will then show that we can factor out a polynomial that  belongs to $I_{A}$ to get a product of $t$ elements of $I_{A}$. 

Since
$f\in I^{(t)}_A\subseteq I^{(t-1)}_A$ thus, by induction, ${\bf m}^af\in I^{t-1}$ for some integer $a\ge0$, that is
\begin{equation}\label{eq.m^af}
{\bf m}^af=c_1e^{\beta_1}\underbrace{f_{v_{1}^{(1)}}\cdots f_{v_{t-1}^{(1)}}}_{f_1}+\cdots +c_re^{\beta_r}\underbrace{f_{v_{1}^{(r)}}\cdots f_{v_{t-1}^{(r)}}}_{f_{r}}  
\end{equation}
with $v_{i}^{(j)}\in \ker(A)$. Assume $\ker(A)$ minimally spanned by $u_1, \ldots, u_s$ and  $$v_{i}^{(j)}=  \lambda_{1,i}^{(j)}u_1 +\cdots +\lambda_{s,i}^{(j)}u_s \in \ker(A).$$ Then, from Corollary \ref{c.monomial multiple} and Remark \ref{r. f lambda u}, 
 for some large enough integer $b$, we get   
 
 $${\bf m}^bf_{j}=\prod_{i=1}^{t-1}\underbrace{\left(e^{\sigma_{1,i}^{(j)}}f_{\lambda_{1,i}^{(j)}u_1}+\cdots +e^{\sigma_{s,i}^{(j)}}f_{\lambda_{s,i}^{(j)}u_s}\right)}_{\boldsymbol{m}^{b_i}f_{v_i}^{(j)}}=\prod_{i=1}^{t-1}\left(g_{1,i}^{(j)}f_{u_1}+\cdots+g_{s,i}^{(j)}f_{u_s}\right)$$  
where

\begin{equation}\label{eq. g_{p,i}^{(j)}}
g_{p,i}^{(j)}=e^{\sigma_{p,i}^{(j)}}sgn(\lambda_{p,i}^{(j)})\left(\sum\limits_{h=0}^{|\lambda_{p,i}^{(j)}|-1} {e^{hu_p}}   \right)e^{(|\lambda_{p,i}^{(j)}|-1)u_p^-}.
\end{equation}
Then, we have
$${\bf m}^{a+b}f=c_1e^{\beta_1}\left(\prod_{i=1}^{t-1}\left(g_{1,i}^{(1)}f_{u_1}+\cdots+g_{s,i}^{(1)}f_{u_s}\right)\right)+\cdots +c_re^{\beta_r}\left(\prod_{i=1}^{t-1}\left(g_{1,i}^{(r)}f_{u_1}+\cdots+g_{s,i}^{(r)}f_{u_s}\right)\right).$$
For any $P\in [s]^{t-1}$ the coefficient of $F_P=\prod\limits_{i\in P}f_{u_i}$ in $\boldsymbol{m}^{a+b}f$ is 
$$G_P= \sum_{p\in \mathcal Perm(P)} \left(c_1e^{\beta_1}\left(\prod_{i=1}^{t-1}g_{p_i,i}^{(1)}\right)+ \cdots +c_re^{\beta_r}\left(\prod_{i=1}^{t-1}g_{p_i,i}^{(r)}\right) \right)=\sum_{p\in \mathcal Perm(P)} \sum_{j=1}^r \left(c_je^{\beta_j}\left(\prod_{i=1}^{t-1}g_{p_i,i}^{(j)}\right) \right)$$
here $\mathcal Perm(P)$ is the set of all the permutations of $P$, that is the set of vectors obtained permuting the entries of $P$. 
 In order to finish the proof we have to show that $G_P\in I_A$, therefore, since $F_P\in I_A^{t-1}$, we have that $\boldsymbol{m}^{a+b}f$ can be written as a sum of elements in $I_A^t.$ 
In order to prove that $G_P\in I_A$ we compute $\pi^{(0)}(G_P)$ and show that it is zero, so from Remark \ref{r. NS condition for being in I} we are done.

From Equation \eqref{eq. g_{p,i}^{(j)}} we have
$$G_P=  \sum_{p\in \mathcal Perm(P)} \sum_{j=1}^r \left(c_je^{\beta_j}\left(\prod_{i=1}^{t-1}e^{\sigma_{p_i,i}^{(j)}}sgn(\lambda_{p_i,i}^{(j)})\left(\sum\limits_{h=0}^{|\lambda_{p_i,i}^{(j)}|-1} {e^{hu_{p_i}}}   \right)e^{(|\lambda_{p_i,i}^{(j)}|-1)u_{p_i}^-}\right) \right).$$
Thus
$$\pi^{(0)}(G_P)=\sum_{p\in \mathcal Perm(P)} \sum_{j=1}^r \left(c_j\left(\prod_{i=1}^{t-1}sgn(\lambda_{p_i,i}^{(j)})\left(|\lambda_{p_i,i}^{(j)}|   \right)\right) \right)=\sum_{p\in \mathcal Perm(P)} \sum_{j=1}^r \left(c_j\prod_{i=1}^{t-1}\lambda_{p_i,i}^{(j)}\right).$$
We use Lemma \ref{l.pi^{(t-1)}} to show that this number is 0. 
From Equation \eqref{eq.m^af} and the assumption that $f\in I_A^{(t)}$, we have   ${\bf m}^af=c_1e^{\beta_1} {f_{v_{1}^{(1)}}\cdots f_{v_{t-1}^{(1)}}} +\cdots + {c_re^{\beta_r}f_{v_{1}^{(r)}}\cdots f_{v_{t-1}^{(r)}}} \in I_A^{(t)}$. Therefore, by Theorem \ref{t.t-th symb pow}, we get $\pi^{(t-1)}({\bf m}^{a}f)=0,$ that is,  
from Lemma \ref{l.pi^{(t-1)}}:
$$\pi^{(t-1)}(c_1e^{\beta_1} {f_{v_{1}^{(1)}}\cdots f_{v_{t-1}^{(1)}}} +\cdots + {c_re^{\beta_r}f_{v_{1}^{(r)}}\cdots f_{v_{t-1}^{(r)}}})=\sum_{j=1}^r c_j  \sum_{p\in [s]^{t-1}}\left( \prod_{i=1}^{t-1} \lambda_{p_i,i}^{(j)}\right)  u_{p_1}\otimes \cdots\otimes u_{p_{t-1}}  =0.$$ 
However the tensors $u_{p_1}\otimes \cdots\otimes u_{p_{t-1}}$ are linearly independent for $p\in [s]^{t-1}$, 
and then \[\sum_{j=1}^r c_j \left( \prod_{i=1}^{t-1} \lambda_{p_i,i}^{(j)}\right) =0, \ \ \forall\ p\in [s]^{t-1}.\qedhere\]
\end{proof}

The following lemma allows us to improve the computation of the symbolic powers of a toric ideal, as we explain in Remark \ref{r.improving}.
\begin{lemma}\label{l. main Saturations} Let $I$ be a homogeneous ideal. Then\ \ 
    $(I)^t:{\bf m}^\infty=(I:{\bf m}^\infty)^t:{\bf m}^\infty.$
\end{lemma}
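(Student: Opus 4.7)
The plan is to prove the two containments separately. The forward inclusion $(I)^t:\boldsymbol{m}^\infty \subseteq (I:\boldsymbol{m}^\infty)^t:\boldsymbol{m}^\infty$ is immediate from monotonicity: since $I \subseteq I:\boldsymbol{m}^\infty$, raising both to the $t$-th power gives $I^t \subseteq (I:\boldsymbol{m}^\infty)^t$, and then saturating each side by $\boldsymbol{m}^\infty$ preserves the inclusion.

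For the reverse inclusion, I would pick $f \in (I:\boldsymbol{m}^\infty)^t : \boldsymbol{m}^\infty$ and produce an exponent $L$ with $\boldsymbol{m}^L f \in I^t$. By definition there is $N \ge 0$ with $\boldsymbol{m}^N f \in (I:\boldsymbol{m}^\infty)^t$, so we can write
\[
\boldsymbol{m}^N f \;=\; \sum_{k=1}^{K} h_1^{(k)} h_2^{(k)} \cdots h_t^{(k)},
\]
a finite sum of products with each $h_i^{(k)} \in I:\boldsymbol{m}^\infty$. For each factor choose $N_i^{(k)}$ so that $\boldsymbol{m}^{N_i^{(k)}} h_i^{(k)} \in I$, and set $M = \max_{k} \sum_{i=1}^{t} N_i^{(k)}$. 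Then for every index $k$,
\[
\boldsymbol{m}^{M} \prod_{i=1}^{t} h_i^{(k)} \;=\; \boldsymbol{m}^{\,M - \sum_i N_i^{(k)}} \prod_{i=1}^{t}\bigl(\boldsymbol{m}^{N_i^{(k)}} h_i^{(k)}\bigr) \;\in\; I^t,
\]
since each factor in the product belongs to $I$ and the remaining monomial is in $R$. Summing over $k$ gives $\boldsymbol{m}^{N+M} f \in I^t$, so $f \in I^t : \boldsymbol{m}^\infty$, completing the proof.

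There is no real obstacle here; the only point that requires a bit of care is that the saturation exponents $N_i^{(k)}$ depend on the finitely many factors in a chosen expression of $\boldsymbol{m}^N f$ as an element of $(I:\boldsymbol{m}^\infty)^t$, so one must take the maximum over the finite sum to obtain a single uniform exponent $M$. The homogeneity hypothesis on $I$ is not strictly needed for this argument — the colon ideal $I:\boldsymbol{m}^\infty$ is well defined in any case — but it is natural in the context of the paper, where the result will be applied to toric ideals in order to streamline the computation of $I_A^t : \boldsymbol{m}^\infty$ by first replacing $I_A$ with generators of $I_A : \boldsymbol{m}^\infty$.
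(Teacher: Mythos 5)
Your proof is correct and follows essentially the same route as the paper: the forward inclusion by monotonicity of powers and saturation, and the reverse by expanding $\boldsymbol{m}^N f$ as a finite sum of $t$-fold products from $I:\boldsymbol{m}^\infty$ and clearing denominators with a single uniform power of $\boldsymbol{m}$. Your version merely makes the choice of the uniform exponent $M$ more explicit than the paper's "for $b$ large enough."
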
 
\begin{proof}
Since $I\subseteq I:{\bf m}^\infty$ we have $(I)^t\subseteq (I:{\bf m}^\infty)^t$ and hence $(I)^t:{\bf m}^\infty\subseteq (I:{\bf m}^\infty)^t:{\bf m}^\infty.$

    Let $f\in (I:{\bf m}^\infty)^t:{\bf m}^\infty$ be a homogeneous form,  then  we have $f{\bf m}^a\in (I:{\bf m}^\infty)^t$ for $a$ large enough, i.e.,
$$f{\bf m}^a=f_1^{(1)}\cdots f_t^{(1)}+ \cdots +f_1^{(d)}\cdots f_t^{(d)}$$
    where $f_i^{(j)}\in I:{\bf m}^\infty$, for $i=1,\ldots, t$ and $j=1,\ldots, d.$
    Then, for $b$ large enough, 
    $f_1^{(j)}\cdots f_t^{(j)}\cdot{\bf m}^b\in (I)^t$, for $j=1,\ldots, d.$
    Therefore $f{\bf m}^{a+b}\in (I)^t$ and we are done.
\end{proof}
\begin{remark}[Optimizing the computation of symbolic powers of toric ideals]\label{r.improving}
       From  \cite[Lemma 12.2]{sturmfels1996}, see Lemma \ref{l. Sturm96} in this paper, a toric ideal $I_A$ can be obtained by saturating with ${\bf m}$ the ideal $J$ generated by the binomial forms corresponding to the vectors in a basis of $\ker(A).$ 
From Theorem \ref{t. sat t-th power}, $I_A^{(t)}=I_A^{t}:{\bf m}^{\infty}$, thus as a consequence of Lemma \ref{l. main Saturations} we get $I_A^{(t)}=J^t:{\bf m}^{\infty}$. The ideal $J$ has, in general, less generators than $I_A$ and for this reason the computation of its regular power is faster. 
\end{remark}

The next example shows how Theorem \ref{t.t-th symb pow} allows to compute symbolic powers as kernel of linear maps. \begin{example}
    Consider the graph $K_{5}=\left(\{x_1,x_2,x_3, x_4, x_5\}, \{e_{ij}=\{x_i,x_j\}, i\neq j\in \{1,\ldots,5\} \}\right).$
    
    Let  $I=I_{K_{5}}\subseteq \K[e_{ij}| 1\le i<j\le 5]$ be the toric ideal of the complete graph $K_{5}.$ 
    A set of (non-minimal) generators  for the ideal $I=I_{K_{5}}$  is $\{e_{ij}e_{ab}- e_{ib}e_{aj} \}.$ 
The ideal $I^3$ is generated in degree $6.$ To show that $I^{(3)}$ has generators of degree 5, we need to focus on monomials of degree 10 in the variables $x_i$.  
In particular, we consider ${\bf m}^2=x_{1}^{2}x_{2}^{2}x_{3}^{2}x_{4}^{2}x_{5}^{2}$. 
The fiber of ${\bf m}^2$,
is the following set of monomials of degree 5 in the variables $e_{ij}$: 
\begin{align*}
\{ &e_{12}e_{13}e_{23}e_{45}^{2},\:e_{12}^{2}e_{34}e_{35}e_{45},\:e_{12}e_{13}e_{24}e_{35}e_{45}, \;e_{12}e_{14}e_{23}e_{35}e_{45},\:e_{12}e_{13}e_{25}e_{34}e_{45},\:e_{12}e_{15}e_{2,3}e_{34}e_{45}, \\
&e_{13}^{2}e_{24}e_{25}e_{45},\:e_{13}e_{14}e_{23}e_{25}e_{45},\:e_{13}e_{15}e_{23}e_{24}e_{45},\:e_{14}e_{15}e_{23}^{2}e_{45},\:e_{12}e_{14}e_{24}e_{35}^{2},\:e_{12}e_{14}e_{25}e_{34}e_{35}, \\
&e_{12}e_{15}e_{24}e_{34}e_{35},\:e_{13}e_{14}e_{24}e_{25}e_{35},\:e_{14}^{2}e_{23}e_{25}e_{35},\:e_{13}e_{15}e_{24}^{2}e_{35},\:e_{14}e_{15}e_{23}e_{24}e_{35},\:e_{12}e_{15}e_{25}e_{34}^{2}, \\
&e_{13}e_{14}e_{25}^{2}e_{34},\:e_{13}e_{15}e_{24}e_{25}e_{34},\:e_{14}e_{15}e_{23}e_{25}e_{34},\:e_{15}^{2}e_{23}e_{24}e_{34}\}.
\end{align*}
We construct the matrix which represents the map where the monomials are an ordered basis and they are mapped to there image under the $\pi^{(2)}$ map.  

\begin{minipage}{0.39\linewidth} \fontsize{5pt}{6pt}
\renewcommand\arraystretch{0.39}
{
\hspace{1in}$\left(\!\begin{array}{cccccccccccccccccccccc}
      0&2&0&0&0&0&0&0&0&0&0&0&0&0&0&0&0&0&0&0&0&0\\
      1&0&1&0&1&0&0&0&0&0&0&0&0&0&0&0&0&0&0&0&0&0\\
      0&0&0&1&0&0&0&0&0&0&1&1&0&0&0&0&0&0&0&0&0&0\\
      0&0&0&0&0&1&0&0&0&0&0&0&1&0&0&0&0&1&0&0&0&0\\
      1&0&0&1&0&1&0&0&0&0&0&0&0&0&0&0&0&0&0&0&0&0\\
      0&0&1&0&0&0&0&0&0&0&1&0&1&0&0&0&0&0&0&0&0&0\\
      0&0&0&0&1&0&0&0&0&0&0&1&0&0&0&0&0&1&0&0&0&0\\
      0&2&0&0&1&1&0&0&0&0&0&1&1&0&0&0&0&2&0&0&0&0\\
      0&2&1&1&0&0&0&0&0&0&2&1&1&0&0&0&0&0&0&0&0&0\\
      2&2&1&1&1&1&0&0&0&0&0&0&0&0&0&0&0&0&0&0&0&0\\
      0&0&0&0&0&0&2&0&0&0&0&0&0&0&0&0&0&0&0&0&0&0\\
      0&0&0&0&0&0&0&1&0&0&0&0&0&1&0&0&0&0&1&0&0&0\\
      0&0&0&0&0&0&0&0&1&0&0&0&0&0&0&1&0&0&0&1&0&0\\
      1&0&0&0&0&0&0&1&1&0&0&0&0&0&0&0&0&0&0&0&0&0\\
      0&0&1&0&0&0&2&0&1&0&0&0&0&1&0&2&0&0&0&1&0&0\\
      0&0&0&0&1&0&2&1&0&0&0&0&0&1&0&0&0&0&2&1&0&0\\
      0&0&0&0&1&0&0&0&0&0&0&0&0&0&0&0&0&0&1&1&0&0\\
      0&0&1&0&0&0&0&0&0&0&0&0&0&1&0&1&0&0&0&0&0&0\\
      2&0&1&0&1&0&2&1&1&0&0&0&0&0&0&0&0&0&0&0&0&0\\
      0&0&0&0&0&0&0&0&0&0&0&0&0&0&2&0&0&0&0&0&0&0\\
      0&0&0&0&0&0&0&0&0&1&0&0&0&0&0&0&1&0&0&0&1&0\\
      0&0&0&1&0&0&0&1&0&2&0&0&0&0&2&0&1&0&0&0&1&0\\
      0&0&0&0&0&0&0&0&0&0&1&0&0&1&0&0&1&0&0&0&0&0\\
      0&0&0&0&0&0&0&1&0&0&0&1&0&1&2&0&0&0&2&0&1&0\\
      0&0&0&0&0&0&0&0&0&0&0&1&0&0&0&0&0&0&1&0&1&0\\
      0&0&0&1&0&0&0&0&0&0&2&1&0&1&2&0&1&0&0&0&0&0\\
      0&0&0&1&0&0&0&1&0&1&0&0&0&0&0&0&0&0&0&0&0&0\\
      0&0&0&0&0&0&0&0&0&0&0&0&0&0&0&0&0&0&0&0&0&2\\
      0&0&0&0&0&1&0&0&1&2&0&0&0&0&0&0&1&0&0&0&1&2\\
      0&0&0&0&0&0&0&0&1&0&0&0&1&0&0&2&1&0&0&1&0&2\\
      0&0&0&0&0&0&0&0&0&0&0&0&0&0&0&0&0&1&0&1&1&0\\
      0&0&0&0&0&1&0&0&0&0&0&0&1&0&0&0&0&2&0&1&1&2\\
      0&0&0&0&0&0&0&0&0&0&0&0&1&0&0&1&1&0&0&0&0&0\\
      0&0&0&0&0&1&0&0&1&1&0&0&0&0&0&0&0&0&0&0&0&0\\
      0&0&0&0&0&0&0&0&0&2&0&0&0&0&0&0&0&0&0&0&0&0\\
      0&0&0&0&0&0&0&0&1&0&0&0&0&0&0&0&1&0&0&0&0&1\\
      0&0&0&0&0&0&0&1&0&0&0&0&0&0&1&0&0&0&0&0&1&0\\
      0&0&0&0&0&1&0&0&0&0&0&0&0&0&0&0&0&0&0&0&1&1\\
      0&0&0&1&0&0&0&0&0&0&0&0&0&0&1&0&1&0&0&0&0&0\\
      2&0&0&1&0&1&0&1&1&2&0&0&0&0&0&0&0&0&0&0&0&0\\
      0&0&0&0&0&0&0&0&0&0&0&0&0&0&0&2&0&0&0&0&0&0\\
      0&0&0&0&0&0&1&0&0&0&0&0&0&1&0&0&0&0&0&1&0&0\\
      0&0&0&0&0&0&0&0&0&0&0&0&1&0&0&0&0&0&0&1&0&1\\
      0&0&1&0&0&0&0&0&0&0&2&0&1&1&0&2&1&0&0&0&0&0\\
      0&0&1&0&0&0&1&0&1&0&0&0&0&0&0&0&0&0&0&0&0&0\\
      0&0&0&0&0&0&0&0&0&0&0&0&0&0&0&0&0&0&2&0&0&0\\
      0&0&0&0&1&0&0&0&0&0&0&1&0&0&0&0&0&2&2&1&1&0\\
      0&0&0&0&0&0&0&0&0&0&0&1&0&1&1&0&0&0&0&0&0&0\\
      0&0&0&0&1&0&1&1&0&0&0&0&0&0&0&0&0&0&0&0&0&0\\
      0&0&0&0&0&0&0&0&0&0&0&0&0&0&0&0&0&2&0&0&0&0\\
      0&1&0&0&0&0&0&0&0&0&0&1&1&0&0&0&0&0&0&0&0&0\\
      0&1&0&0&1&1&0&0&0&0&0&0&0&0&0&0&0&0&0&0&0&0\\
      0&0&0&0&0&0&0&0&0&0&2&0&0&0&0&0&0&0&0&0&0&0\\
      0&1&1&1&0&0&0&0&0&0&0&0&0&0&0&0&0&0&0&0&0&0\\
      4&1&1&1&1&1&1&1&1&1&0&0&0&0&0&0&0&0&0&0&0&0
      \end{array}\!\right).$}    
\end{minipage}%

\noindent The kernel of this matrix is generated by the transposed of the vector 
$$(      0, 0, 1, -1, -1, 1, 0, 1, -1, 0, 0, 1, -1, -1, 0, 0, 1, 0, 0, 1, -1, 0)$$
which corresponds via our ordered basis to 

\[
\begin{array}{rl}
    F= & e_{12}e_{13}e_{24}e_{35}e_{45} -e_{12}e_{13}e_{25}e_{34}e_{45} -e_{12}e_{14}e_{23}e_{35}e_{45} +e_{12}e_{14}e_{25}e_{34}e_{35}  \\
     &+e_{12}e_{15}e_{23}e_{34}e_{45} -e_{12}e_{15}e_{24}e_{34}e_{35} +e_{13}e_{14}e_{23}e_{25}e_{45} -e_{13}e_{14}e_{24}e_{25}e_{35} \\
     & -e_{13}e_{15}e_{23}e_{24}e_{45}+e_{13}e_{15}e_{24}e_{25}e_{34} +e_{14}e_{15}e_{23}e_{24}e_{35} -e_{14}e_{15}e_{23}e_{25}e_{34}.
\end{array}   
  \]
Then $F\in I^{(3)}$ by Theorem \ref{t.t-th symb pow}. As noticed above $F$ has degree 5 and hence $F\notin I^3$.

This example was obtained using the following M2 code. 
{\small
\begin{verbatim}-- A function which takes an ordered basis of monomials and returns the matrix 
-- used to compute the third symbolic power 

secondFiberMatrix = A -> 
(
    M := {}; 
    for x in A do M = append(M,multiDeg x); 
    L1 := {};
    for l from 0 to (length(M)-1) do 
    (
    L2 := {}; 
    for i from 0 to length(flatten entries vars ring A_0)-2  do
    (
    L2 = append(L2,(M_l_i)*((M_l_i) - 1)); 
    for j from i+1 to length(flatten entries vars ring A_0)-1 do 
    L2 = append(L2,(M_l_i)*(M_l_j))
    ); 
    L2 = append(L2,(M_l_(length(flatten(entries(vars(ring A_0)))) - 1))*
                     (M_l_(length(flatten(entries(vars(ring A_0)))) - 1)));
    L1 = append(L1,L2)
    ); 
    transpose matrix L1
) 
\end{verbatim}

\begin{verbatim}
-- function takes a map and a multidegree in the image 
-- and returns a list of the monomials in the fiber

spaghetti = (F,m) -> 
(
    MinGens := flatten(entries(mingens(ker(F)))), L2 := {}, 
        G := flatten(entries(mingens(preimage(F,ideal(m))))); 
    for i in G do 
    if (F(i) == m) then L2 = append(L2,i); 
    M := set {}; 
    N := set {L2_0};
    while not promote(ideal(toList(M)),source F) == ideal(toList(N)) do 
    (
    Temp := N - M; 
    New = set {}; 
    for A in toList Temp do 
    (
    L := {A}; T := {}; 
    for i from 0 to length(MinGens)-1 do 
    T = join(T,{leadTerm MinGens_i, MinGens_i - leadTerm MinGens_i});
    for i from 0 to length(T)-1 do 
    (if isSubset(ideal(A),ideal(T_i)) then 
    (if odd(i) then L = append(L,substitute((A/T_i)*T_(i-1),source F)) 
    else L = append(L,substitute((A/T_i)*T_(i+1),source F))));
    New = New + set L
    ); 
    M = M + N; 
    N = N + New
    ); 
    flatten(entries(mingens(ideal(toList(M)))))
)
\end{verbatim}

\begin{verbatim}
-- function which takes a monomial and returns the multidegree 

multiDeg = m -> 
(
    R := ring m; 
    L := {}; 
    for X in flatten entries vars R do 
    (
    i := 1; while not(substitute(m/X^i,R) == 0) do 
    i = i + 1; 
    L = append(L,i-1)
    ); 
    L
)  
\end{verbatim}

\begin{verbatim}
-- A function which takes an ordered basis of monomials and returns the matrix 
-- used to compute the second symbolic power
fiberMatrix = A -> 
(   
    M := {}; 
    for x in A do 
    M = append(M,multiDeg x); 
    transpose(matrix(M))
)  
\end{verbatim}}
\end{example}

\section*{ Statements and Declarations.}

\noindent{\bf Acknowledgements:} 
The authors thank Elena Guardo for suggesting the topic and for the useful conversations.
Favacchio is member of GNSAGA-INdAM and thanks the hospitality of Università degli studi di Catania and the support of D26\_PREMIO\_GRUPPI\_RIC2023\_TRIOLO\_SALVATORE.  Keiper  was supported by the project “0-dimensional schemes, Tensor Theory and applications”—PRIN 2022-Finanziato dall’Unione europea-Next Generation EU–CUP: E53D23005670006. 
Results in this note were inspired by computations with CoCoA \cite{cocoa} and Macaulay2 \cite{macaulay2}.

\noindent{\bf Competing interests:} The authors have no  potential conflicts of interest (financial or non-financial) to declare that are relevant to the content of this article.

\end{document}